\newcommand{\mytitle}{Double-normal pairs in the plane\\ and on the sphere}
\def\MT@register@subst@font{\MT@exp@one@n\MT@in@clist\font@name\MT@font@list
   \ifMT@inlist@\else\xdef\MT@font@list{\MT@font@list\font@name,}\fi}
\theoremstyle{plain}
\newtheorem{theorem}{Theorem}
\newtheorem{lemma}[theorem]{Lemma}
\newtheorem{claim}[theorem]{Lemma}
\theoremstyle{definition}
\newcommand{\setbuilder}[2]{\left\{#1\;\colon\,#2\right\}}
\newcommand{\set}[1]{\left\{#1\right\}}
\newcommand{\myangle}{\angle}
\newcommand{\length}[1]{\lvert#1\rvert}
\newcommand{\card}[1]{\left\lvert#1\right\rvert}
\newcommand{\numbersystem}[1]{\mathbb{#1}}
\newcommand{\bR}{\numbersystem{R}}
\newcommand{\bS}{\numbersystem{S}}
\DeclareMathOperator{\conv}{conv}
\DeclareMathOperator{\diam}{diam}
\newcommand{\vect}[1]{\bm{#1}}
\newcommand{\va}{\vect{a}}
\newcommand{\vb}{\vect{b}}
\newcommand{\vc}{\vect{c}}
\newcommand{\vd}{\vect{d}}
\newcommand{\ve}{\vect{e}}
\newcommand{\vo}{\vect{o}}
\newcommand{\vp}{\vect{p}}
\newcommand{\vq}{\vect{q}}
\newcommand{\vs}{\vect{s}}
\newcommand{\vv}{\vect{v}}
\newcommand{\vx}{\vect{x}}
\newcommand{\vy}{\vect{y}}
\newcommand{\vz}{\vect{z}}
\newcommand{\define}[1]{\emph{#1}}
\title{\mytitle}
\author{J\'anos Pach\thanks{Research partially supported by Swiss National Science Foundation Grants 200021-137574 and 200020-144531, by Hungarian Science Foundation Grant OTKA NN 102029 under the EuroGIGA programs ComPoSe and GraDR, and by NSF grant CCF-08-30272.}\\
EPFL Lausanne and \\
R\'enyi Institute, Budapest\\
\href{mailto:pach@cims.nyu.edu}{\texttt{pach@cims.nyu.edu}}
\and 
Konrad J.\ Swanepoel\\
Department of Mathematics,\\ London School of Economics and Political Science,\\ Houghton Street, London WC2A 2AE, United Kingdom\\
\href{mailto:k.swanepoel@lse.ac.uk}{\texttt{k.swanepoel@lse.ac.uk}}
}
\date{}
\begin{document}
\maketitle
\begin{abstract}
A \define{double-normal pair} of a finite set $S$ of points from Euclidean space is a pair of points $\set{\vp,\vq}$ from $S$ such that $S$ lies in the closed strip bounded by the hyperplanes through $\vp$ and $\vq$ that are perpendicular to~$\vp\vq$.
A double-normal pair $\vp\vq$ is \define{strict} if $S\setminus\set{\vp,\vq}$ lies in the open strip.
We answer a question of Martini and Soltan (2006) by showing that 
a set of $n\geq 3$ points in the plane has at most $3\lfloor n/2\rfloor$ double-normal pairs.
This bound is sharp for each $n\geq 3$.

In a companion paper, we have asymptotically determined this maximum for points in $\bR^3$.
Here we show that if the set lies on some $2$-sphere, it has at most
$17n/4 - 6$ double-normal pairs.
This bound is attained for infinitely many values of $n$.

We also establish tight bounds for the maximum number of strict double-normal pairs in a set of $n$ points in the plane and on the sphere.
\end{abstract}


\section{Introduction}
Let $V$ be a set of $n$ points in Euclidean space.
A \define{double-normal pair} of $V$ is a pair of points $\set{\vp,\vq}$ in $V$ such that $V$ lies in the closed strip bounded by the hyperplanes $H_{\vp}$ and $H_{\vq}$ through $\vp$ and $\vq$, respectively, that are perpendicular to $\vp\vq$.
A double-normal pair $\vp\vq$ is \define{strict} if $V\setminus\set{\vp,\vq}$ is disjoint from $H_{\vp}$ and $H_{\vq}$.
Define the \define{double-normal graph} of $V$ as the graph on the vertex set $V$ in which two vertices $p$ and $q$ are joined by an edge if and only if $\{p,q\}$ is a double-normal pair.
The number of edges of this graph, that is, the number of double-normal pairs induced by $V$, is denoted by $N(V)$.

We define the \define{strict double-normal graph} of $V$ analogously and denote its number of edges by $N'(V)$.

Martini and Soltan \cite[Problems~3 and 4]{martini-soltan-2005} initiated the investigation of the maximum number of double-normal pairs and strict double-normal pairs of a set of $n$ points in $\bR^d$.
Define
\[ N_d(n) := \max_{\substack{V\subset \bR^d\\ \card{V}=n}} N(V)\]
and
\[ N_d'(n) := \max_{\substack{V\subset \bR^d\\ \card{V}=n}} N'(V).\]
Clearly, we have $N(V)\geq N'(V)$, hence, $N_d(n)\geq N_d'(n)$.

A lower bound to $N_d'(n)$ is provided by the maximum number of diameter pairs that can occur in a set of $n$ points.
A \define{diameter pair} of $S$ is a pair of points $\{\vp,\vq\}$ in $S$ such that $\length{\vp\vq}=\diam(S)$.
Let $M_d(n)$ denote the maximum number of diameter pairs of a set of $n$ points in $\bR^d$.
Since a diameter pair of $S$ is also a strict double-normal pair of $S$, $M_d(n)\leq N_d'(n)$.
It is well-known that $M_2(n)=n$ for $n\geq 3$ \cite{Erdos46}
and $M_3(n)=2n-2$ for $n\geq 4$ \cite{Grmax, Heppesmax, Strasmax}, thus 
giving $N_2(n)\geq N_2'(n)\geq n$ and $N_3(n)\geq N_3'(n)\geq 2n-2$.

Since any two strict double-normal pairs without common endpoints in the plane have to cross, it follows from the same well-known proof due to Perles that gives $M_2(n)\leq n$ \cite[Theorem~9]{Pach}, that a set of $n$ points in the plane has at most $n$ strict double-normal pairs, that is, $N'_2(n)\leq n$.
Thus, the exact value $N_2'(n)=n$ for $n\geq 3$ follows from the above results.
Our next theorem states that $N_2(n)=3\lfloor n/2\rfloor$.

\begin{theorem}\label{thm:2d}
Given a finite set $V$ of at least~$3$ points in the plane,
the number of double-normal pairs in $V$ satisfies 
\[N(V)\leq 3\left\lfloor\frac{\card{V}}{2}\right\rfloor\text{.}\]
This bound can be attained for all $\card{V}\geq 3$.
If $\card{V}$ is even and $N(V)=3\card{V}/2$, then $V$ lies on a circle and is symmetric with respect to the centre of the circle.
\end{theorem}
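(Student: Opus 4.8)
We outline the argument, working throughout with the convex polygon $P:=\conv V$. A pair $\set{\vp,\vq}$ from $V$ is a double-normal pair exactly when $\myangle\vv\vp\vq\le\pi/2$ and $\myangle\vv\vq\vp\le\pi/2$ for every $\vv\in V$; equivalently, $\vp,\vq$ lie on $\partial P$ and the chord $\vp\vq$ is perpendicular to a supporting line of $P$ at $\vp$ and to one at $\vq$. The first step is to discard the points of $V$ that are not vertices of $P$. A point interior to $P$ lies in no double-normal pair; and a point $\vw$ in the relative interior of an edge $\ve$ can only be matched to a point on the line through $\vw$ orthogonal to $\ve$, and that point must moreover lie on the opposite supporting line of $P$ parallel to $\ve$, so $\vw$ has at most one partner. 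Deleting such a point and arguing by induction on $\card{V}$ — invoking the equality statement of the theorem for the smaller configuration to control the one case where the inductive bound is tight — one reduces to the situation where $V=\set{\vv_1,\dots,\vv_n}$ is exactly the vertex set of $P$, taken in cyclic order.

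The heart is a rotating-direction count via the width function. For $\theta\in[0,\pi)$ let $w(\theta)$ be the width of $P$ in direction $\theta$; it is continuous and piecewise sinusoidal, its breakpoints being exactly the $k$ distinct directions normal to an edge of $P$, where $n/2\le k\le n$ because at most two edges of a convex polygon are parallel to a given line. On each of the $k$ smooth pieces the supporting vertices $v_{\max},v_{\min}$ are fixed, $w$ is the cosine attached to the chord $v_{\max}v_{\min}$, and that chord has a fixed direction $\phi$ at which $w$ attains its maximum over the piece; the pair $\set{v_{\max},v_{\min}}$ is a double-normal pair precisely when $\phi$ lies in the open interior of the piece. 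Hence every double-normal pair of $V$ arises in one of two ways: from a \emph{piece} whose chord-direction lies in its interior, or from a \emph{breakpoint}, as a chord joining a vertex of one supporting edge to a vertex of the opposite supporting face orthogonally to both. A piece contributes at most one such pair; a breakpoint whose edge-class has two edges contributes at most two, one whose class has a single edge at most one.

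To turn these local bounds into $3\lfloor n/2\rfloor$ I would use that the chord-directions of the $k$ pieces, listed in order, increase monotonically around $[0,\pi)$, jumping forward exactly at the breakpoints. From this one checks that at a breakpoint whose class has a single edge the combined contribution of that breakpoint together with its two flanking pieces is at most two (rather than the naive three): writing $P$ for the number of active pieces, $B$ for the total breakpoint contribution, $p_1,p_2$ for the numbers of one- and two-edge classes, this gives $B+2P\le 2p_1+4p_2$ and $B\le 2p_2+p_1$, hence $2N(V)\le 3p_1+6p_2=3n$, i.e.\ $N(V)\le\lfloor 3n/2\rfloor$. A concluding parity argument — the near-extremal configurations that could attain the half-integral value $\tfrac12(3n-1)$ are forced to have an even number of vertices — removes the last discrepancy when $n$ is odd and yields $N(V)\le 3\lfloor n/2\rfloor$. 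I expect this amortisation and its parity endgame to be the main obstacle: one must determine exactly which pieces and breakpoints can be simultaneously active and show they cannot be packed too tightly along $[0,\pi)$.

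For the equality statement ($n$ even, $N(V)=3n/2$), every inequality above must be tight: each edge-class has exactly two edges, so $P$ carries $n/2$ pairs of parallel edges; each breakpoint supplies two orthogonal vertex-to-vertex chords, which means each edge and its parallel companion project onto the same interval of a common parallel line and are therefore translates of one another, so $P$ is centrally symmetric; and exactly $n/2$ pieces are active, each with its chord meeting both supporting lines at right angles. A further examination of the active pieces, using the central symmetry, then forces all the vertices of $P$ to be equidistant from the centre of symmetry, so $V$ lies on a circle and is symmetric about its centre. Finally, the bound is attained for every $n\ge 3$: by a regular $n$-gon when $n$ is even, and by a regular $2\lfloor n/2\rfloor$-gon together with its centre when $n$ is odd (the interior point lying in every strip of the polygon and hence contributing no double-normal pair), both of which are verified directly.
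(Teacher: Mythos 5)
Your rotating-width argument is a genuinely different route from the paper's, which follows Perles' scheme: colour the rightmost double-normal edge at each vertex red, show that the remaining (blue) edges form a pairwise crossing matching, and conclude $\card{E}\leq n+n/2$. Your amortised count over the circle of directions is sound as far as it goes for the even case: each double-normal pair is detected exactly once, at the direction of its own chord; at a breakpoint where a single edge $\va\vb$ is normal to the direction and the opposite support is a vertex $\vc$, the breakpoint and its two flanking pieces can only ever involve the two pairs $\set{\va,\vc}$ and $\set{\vb,\vc}$, so their combined contribution is indeed at most $2$; and the inequalities $B+2P\leq 2p_1+4p_2$ and $B\leq p_1+2p_2$ then give $2N(V)\leq 3n$. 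The reduction to vertex sets of the hull also works, because for a centrally symmetric polygon inscribed in a circle the foot of the perpendicular from $-\va$ to the line through the edge $\va\vb$ is exactly $\vb$, so a point in the relative interior of an edge of an extremal configuration acquires no partner; you should, however, state this computation rather than leave it implicit. What your approach buys is a direct, crossing-free count; what the paper's approach buys is local combinatorial structure (which edges are ``rightmost'') that can be pushed further.

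The genuine gap is the odd case. Your count yields only $2N(V)\leq 3n$, i.e.\ $N(V)\leq\lfloor 3n/2\rfloor=3\lfloor n/2\rfloor+1$ for odd $n$, and the ``concluding parity argument'' that is supposed to exclude $2N(V)=3n-1$ is asserted, not given --- you yourself flag it as the main obstacle. This is precisely where the difficulty of the theorem is concentrated: in the paper, ruling out the value $3\lfloor n/2\rfloor+1$ takes a full structural analysis of the near-extremal configuration (all rightmost edges are determined except at two vertices, the blue edges are forced to be concurrent equal-length diameters of a common circle, and the two remaining rightmost edges then admit no legal position). An analogous classification would be needed in your framework: one must describe exactly which pieces and breakpoints fail to contribute when the total deficit is $1$, and derive a contradiction from $n$ being odd; nothing in the proposal does this. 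The equality characterisation for even $n$ is likewise only sketched: from ``each parallel edge pair spans a rectangle'' one still has to argue that consecutive rectangles share a diagonal, so that all the long chords have a common midpoint and equal length, before concluding that $V$ lies on a circle and is centrally symmetric. As it stands, the proposal proves $N(V)\leq 3\card{V}/2$ for even $\card{V}$ but not the stated bound for odd $\card{V}$.
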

For even values of $n=\card{V}$, the sharpness of the bound in Theorem~\ref{thm:2d} is shown by the vertex set of a regular $n$-gon (Fig.~\ref{octagon}).
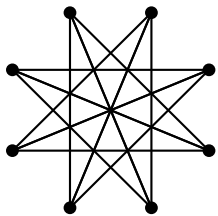
\begin{figure}
\centering
\begin{tikzpicture}[scale=0.7, thick]
\foreach \theta in {0,45,90,...,315} {
    \draw[fill] (\theta+22.5:2cm) circle (1mm);
}
\foreach \theta in {0,45,90,...,315} {
    \draw (\theta+22.5:2cm) -- (\theta+22.5+180:2cm);
}
\foreach \theta in {0,45,90,...,315} {
    \draw (\theta+22.5:2cm) -- (\theta+22.5+180+45:2cm);
}
\end{tikzpicture}
\caption{$12$ double-normal pairs among the vertices of a regular octagon}\label{octagon}
\end{figure}
To obtain an extremal example with an odd number of points, simply add any other point in the interior or on the boundary of the $n$-gon.
For odd $n$, there are other, combinatorially distinct, examples, such as the one in Fig.~\ref{7points}.
\begin{figure}
\centering
\begin{tikzpicture}[scale=0.85, thick]
\coordinate (a) at (-0.7,0);
\coordinate (b) at (0.3,0);
\coordinate (c) at (1,-1);
\coordinate (d) at (0.3,-3.1);
\coordinate (e) at (-0.7,-3.1);
\coordinate (f) at (-1.3,-3);
\coordinate (g) at (-2,-2);
\draw (a)--(e);
\draw (a)--(d);
\draw (b)--(d);
\draw (b)--(e);
\draw (b)--(f);
\draw (b)--(g);
\draw (c)--(f);
\draw (c)--(g);
\draw (d)--(g);
\draw[ultra thick,dotted] (a)--(b);
\draw[ultra thick,dotted] (d)--(e);
\draw[ultra thick,dotted] (b)--(c);
\draw[ultra thick,dotted] (g)--(f);
\foreach \x in {a,b,c,d,e,f,g} {
    \draw[fill=black] (\x) circle (0.7mm);
}
\end{tikzpicture}
\caption{$7$ points with $9$ double-normal pairs}\label{7points}
\end{figure}
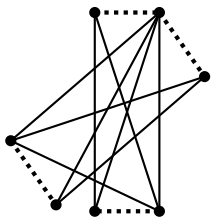

Note that, for even values of $n$, Theorem~\ref{thm:2d} can also be deduced from a result of Gr\"unbaum \cite{Gr} (see \cite{NS} for a proof), using Lemma~\ref{five}(\ref{1}) below.
For odd $n$, the same argument gives only the weaker bound $N(V) \leq 3\bigl\lfloor \frac{\card{V}}{2} \bigr\rfloor + 1$.

In \cite{PS}, we showed that the bounds in spaces of dimension $3$ and higher are quadratic, in particular, \[\lim_{n\to\infty} \frac{N_3(n)}{n^2}=\lim_{n\to\infty} \frac{N'_3(n)}{n^2}=\frac{1}{4}.\]
However, if we restrict a finite subset $V$ of $n$ points in $\bR^3$ to be on the $2$-sphere, then $N(V)$ grows at most linearly in $\card{V}$.

First, note that for any $n\geq 4$ except $n=5$, there exist $n$ points on a $2$-sphere with $2n-2$ diameter pairs.
This matches the maximum number of diameter pairs in $\bR^3$ \cite[Lemma 7(e)]{sw-lenz}.
Since diameter pairs are strict double-normal pairs, it follows that there exist $n$ points on the $2$-sphere with at least $2n-2$ strict double-normal pairs.
This cannot be improved.
\begin{theorem}\label{thm:0}
Given a finite set $V$ of at least $4$ points on a $2$-sphere, the number of strict double-normal pairs in $V$ \textup{(}as a subset of $\bR^3$\textup{)} satisfies \[N(V)\leq 2\card{V}-2.\]
This bound is sharp for each $\card{V}\geq 4$.
\end{theorem}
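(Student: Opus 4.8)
The plan is to analyze the strict double-normal graph $G$ on vertex set $V\subseteq S^2$ and show it has at most $2n-2$ edges by bounding its structure combinatorially. First I would record the basic geometric fact: if $\{\vp,\vq\}$ is a strict double-normal pair then the open spherical caps beyond the two perpendicular planes $H_\vp$ and $H_\vq$ are empty, so every other point of $V$ lies strictly between them. In particular $\vp$ and $\vq$ cannot both be ``far'' from the equatorial great circle of $\vp\vq$; more usefully, I would extract a monotonicity/angle condition: for any strict double-normal pair, the chord $\vp\vq$ subtends an angle at the centre that is at least the angle subtended by any other chord from $\vp$ to a point of $V$ in the appropriate half-space. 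This is the analogue, on the sphere, of the statement that double-normal directions behave like ``local diameters,'' and it is the engine that will make the graph sparse.

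Next I would try to show that $G$ is planar, or at least that it has no $K_5$ or $K_{3,3}$ minor, by realising the edges as great-circle arcs (the shorter arc of the great circle through $\vp$ and $\vq$) and arguing that two strict double-normal arcs without a common endpoint cannot cross. The crossing argument mirrors the planar case quoted in the introduction (two strict double-normal pairs in the plane must cross — here the contrapositive flavour is that on the sphere they must \emph{not} cross, because a crossing would force one pair's endpoint into the forbidden open cap of the other). Granting non-crossing, $G$ embeds on $S^2$ as a plane graph, giving $N(V)\le 3n-6$ immediately; the real work is to improve $3n-6$ to $2n-2$ by ruling out triangular faces, i.e. showing the double-normal graph is triangle-free or more generally that every face of the embedding has length $\ge 4$, which by Euler's formula yields exactly $e\le 2v-4<2n-2$, or with a slightly weaker face condition the stated $2n-2$. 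I expect one needs to allow a bounded number of triangles or to argue that each vertex lies on at most two ``short'' edges on one side, contributing the $-2$ versus the naive $2n-4$.

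The cleanest route to the triangle-free-type statement is probably: suppose $\vp,\vq,\vr$ are pairwise joined by strict double-normal pairs. Then $V$ lies in the intersection of three strips, and in particular $\vp,\vq,\vr$ themselves lie in each other's strips; writing $\vo$ for the centre of the sphere and looking at the unit vectors, the three double-normal conditions become three linear inequalities $\langle \vp-\vq,\vx\rangle$ type constraints that, applied to $\vx\in\{\vp,\vq,\vr\}$, force the spherical triangle $\vp\vq\vr$ to have all angles $\ge 90^\circ$ — impossible on a sphere? No: spherical triangles can have all angles obtuse. So triangle-freeness is false, and instead I would count: orient each edge of $G$ from the endpoint nearer the ``pole'' and show each vertex has out-degree at most $2$, or use a discharging argument on the plane embedding assigning charge to faces and vertices so that triangular faces are paid for by their incident vertices, extracting $e\le 2n-2$. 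The sharp examples with $2n-2$ diameter pairs (Lenz-type constructions on the sphere) should guide exactly which local configurations must be tight, and checking that the discharging leaves slack $2$ overall — accounting for the two ``extreme'' vertices of the configuration — will be the main obstacle.

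The hard part, then, is turning the geometric strict-double-normal condition into a clean combinatorial constraint strong enough to beat $3n-6$ down to $2n-2$: specifically, proving that the plane graph of strict double-normal arcs on $S^2$ has average face length at least (essentially) $4$, with the two units of slack coming from a global argument about the point of $V$ maximising some linear functional. I would first do this assuming $V$ is in general position (no three points on a great circle, no degeneracies in the strips), then handle degeneracies by a perturbation or limiting argument, noting that strictness is an open condition so small perturbations preserve all strict double-normal pairs. The sharpness for every $n\ge 4$ is already supplied by the Lenz-type spherical constructions cited from \cite{sw-lenz}, so only the upper bound requires proof.
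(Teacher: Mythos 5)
Your plan hinges on the claim that two strict double-normal pairs without a common endpoint, drawn as minor great-circle arcs directly between the points of $V$, cannot cross. This is false. Take $\vp=(\sin\epsilon,0,\cos\epsilon)$, $\vq=(\sin\epsilon,0,-\cos\epsilon)$, $\vr=(\cos\delta,\sin\delta,0)$, $\vs=(\cos\delta,-\sin\delta,0)$ for small $\epsilon,\delta>0$. Then $\vp\vq$ is a strict double-normal pair of this four-point set (its slab is bounded by the planes $z=\pm\cos\epsilon$, and $\vr,\vs$ have $z=0$), and so is $\vr\vs$ (slab $y=\pm\sin\delta$, while $\vp,\vq$ have $y=0$); yet both minor arcs contain $(1,0,0)$ in their relative interiors and cross transversally there. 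So planarity of the directly drawn strict double-normal graph does not follow from your argument. Moreover, even granting planarity, you leave the decisive step unproved: you correctly observe that triangle-freeness fails (spherical triangles can have three obtuse angles, and the extremal configurations with $2\card{V}-2$ diameters do contain triangles), and then only list candidate discharging or orientation schemes without carrying any of them out. As written, the proposal does not get below $3\card{V}-6$.

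The paper's proof uses a different device that supplies exactly the bipartiteness your approach is missing. After disposing by induction of the case where $V$ contains an antipodal pair (a degeneracy your minor-arc drawing cannot even represent), it doubles the point set to $V\cup(-V)$ and represents each strict double-normal pair $\vx\vy$ by the \emph{two} arcs joining $\vx$ to $-\vy$ and $-\vx$ to $\vy$. The key observation is that the plane through $\vx$ perpendicular to the chord $\vx\vy$ meets the sphere in the circle with diameter $\vx(-\vy)$, so the strict double-normal condition says precisely that these two arcs are edges of the \emph{strict Gabriel graph} of $V\cup(-V)$; a short spherical-quadrilateral argument (Lemma~\ref{crossing}) shows that strict Gabriel graphs are drawn without crossings. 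The resulting graph is therefore planar and bipartite between $V$ and $-V$, so Euler's formula gives at most $2\cdot 2\card{V}-4$ edges, i.e.\ $2N(V)\leq 4\card{V}-4$. In the counterexample above the doubled arcs $\vp(-\vq)$ and $\vr(-\vs)$ lie near $(0,0,1)$ and $(0,1,0)$ respectively and are disjoint, which is exactly how the doubling repairs the crossing. To salvage your approach you would need either this antipodal doubling or a genuine proof that the average face length in some crossing-free drawing is at least $4$ up to a bounded defect; neither is present.
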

What happens if we wish to bound the number of not necessarily strict double-normals?
The vertex set of the cube in $\bR^3$ shows that $N_3(n)=\binom{n}{2}$ for $n\leq 8$.
However, our next theorem shows that even in this case there is a linear upper bound on the number of double-normals.
\begin{theorem}\label{thm:sphere}
Given a finite set $V$ of at least~$8$ points on a $2$-sphere, the number of double-normal pairs in $V$ \textup{(}as a subset of $\bR^3$\textup{)} satisfies
\[N(V)\leq\frac{17}{4}\card{V}-6\text{.}\]
If equality holds, then $V$ is symmetric around the centre of the sphere, and the faces of the convex hull of $V$ are rectangles and acute triangles, with each vertex belonging to exactly $3$ rectangular faces.

Conversely, for any finite subset $V$ of the $2$-sphere symmetric around the centre of the $2$-sphere, such that the faces of its convex hull are rectangles and triangles, with each vertex belonging to exactly $3$ rectangles, we have $N(V)=\frac{17}{4}\card{V}-6$.
\end{theorem}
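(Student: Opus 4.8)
The plan is to characterise combinatorially when a pair $\{\vp,\vq\}$ on the $2$-sphere $\mathbb{S}$ is a double-normal pair, and then to translate the counting problem into an Euler-formula estimate on the convex hull $P = \conv(V)$. First I would observe that, since $V$ lies on a sphere, $P$ is a polytope all of whose vertices are exactly the points of $V$, and $\{\vp,\vq\}$ is a double-normal pair if and only if both $\vp$ and $\vq$ admit a supporting plane of $P$ perpendicular to $\vp\vq$; equivalently, the slab between the two parallel supporting planes perpendicular to the direction $\vp-\vq$ contains $P$ and has $\vp,\vq$ on its two boundary planes. This is the condition that the line segment $\vp\vq$ realises the width of $P$ in its own direction, i.e. $\vp$ and $\vq$ are an \emph{antipodal pair} of vertices of $P$ in the sense of Gr\"unbaum. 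So $N(V)$ is the number of antipodal vertex-pairs of $P$, and the double-normal graph is the ``antipodality graph'' of the polytope $P$.

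Next I would bring in the earlier structural lemma (the one labelled Lemma~\ref{five}, invoked after Theorem~\ref{thm:2d}) which presumably relates antipodal pairs of a $3$-polytope to its face lattice; the key geometric fact is that $\{\vp,\vq\}$ is antipodal iff there are faces $F \ni \vp$, $G \ni \vq$ of $P$ lying in parallel supporting planes, and for a sphere point $\vp$ on a triangular or rectangular face the possible ``opposite'' faces are tightly constrained. The strategy is then a discharging/Euler argument. Let $V$ have $n = f_0$ vertices, and let $f_1, f_2$ be the numbers of edges and faces of $P$, so $n - f_1 + f_2 = 2$. I would classify the $2$-faces of $P$ by type (triangle, quadrilateral, $\geq 5$-gon), count for each face the number of double-normal pairs it ``contributes'' via opposite parallel faces, and sum. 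Acute triangles and rectangles are the faces that maximise this contribution per vertex; larger faces or obtuse/non-rectangular faces are strictly worse. Writing the total $N(V)$ as a weighted sum over faces and edges and comparing with the Euler relation should yield $N(V) \le \frac{17}{4}n - 6$, with the slack terms vanishing exactly when every face is a rectangle or an acute triangle and every vertex lies in exactly $3$ rectangles; centre-symmetry of $V$ then follows because antipodal parallel faces force it.

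For the converse I would argue directly: given $V$ symmetric about the centre $\vo$ with hull faces only rectangles and triangles and each vertex in exactly $3$ rectangles, I compute $N(V)$ exactly. Each vertex $\vp$ has the antipodal vertex $-\vp \in V$, contributing $n/2$ ``diametral'' double-normal pairs. Each rectangular face $R$ has its centrally-opposite rectangle $-R$ parallel to it, and each of the $4$ vertices of $R$ is antipodal to the $2$ non-adjacent... — more precisely each edge or vertex of $R$ pairs with a vertex of $-R$ — giving a fixed number of additional double-normal pairs per rectangle; since each vertex is in exactly $3$ rectangles, double-counting gives $3n/2$ rectangles... here I must be careful with the bookkeeping, but the point is that plugging $f_0 = n$, the rectangle count $= 3n/2 \cdot \tfrac{1}{4} \cdot$(something), and using Euler's formula to pin down the triangle count, the contributions sum to exactly $\frac{17}{4}n - 6$, independently of how the triangles are arranged (since acute triangles contribute the same maximal amount and here \emph{all} triangles must in fact be acute — a triangle face of an inscribed polytope whose vertices are each in $3$ rectangles is forced to be acute). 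The main obstacle I anticipate is the second step: correctly enumerating, for each type of face and for configurations of faces meeting at an edge or vertex, exactly which opposite faces can be parallel, and organising this into a clean inequality whose equality case is precisely the stated one — in particular ruling out large faces and showing the degree-$3$-in-rectangles condition is forced rather than merely sufficient. The converse computation, by contrast, should be a routine (if fiddly) exact count once the antipodality structure of a centrally symmetric rectangle-and-acute-triangle polytope is understood.
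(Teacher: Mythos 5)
Your reduction of the problem to counting pairs of vertices of $P=\conv V$ that admit parallel supporting planes perpendicular to the connecting segment is correct (note, though, that this is stronger than Gr\"unbaum antipodality, which does not require perpendicularity). The plan for bounding that count, however, has a genuine gap: the proposed discharging over the $2$-faces of $P$ does not index the double-normal pairs. For a double-normal pair $\vp\vq$ the two supporting planes perpendicular to $\vp\vq$ typically meet $P$ only in a vertex or an edge --- already for the cube, the pairs realised by space diagonals and by face diagonals are of this kind --- so double-normal pairs are not in bijection with anything attached to pairs of parallel $2$-faces, and a single vertex can serve as the touching ``face'' for many pairs in many different directions. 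More seriously, the only way the sphere hypothesis enters your argument is through the remark that every point of $V$ is a vertex of $P$; that cannot suffice, because for general $3$-polytopes the number of double-normal pairs is quadratic in $\card{V}$ (this is exactly why $N_3(n)\sim n^2/4$), so a purely face-lattice/Euler computation on $\conv V$ with no further metric input cannot yield a linear bound, let alone the constant $\frac{17}{4}$. You also supply no mechanism for the two quantitative facts your discharging would need: that at most $3$ rectangle-like faces meet at a vertex, and that each face contributes a bounded number of pairs.

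The missing idea is the antipodal doubling used in the paper. Writing $\vx'=-\vx$ and $V':=\setbuilder{\vv'}{\vv\in V}$, one checks that the plane through $\vx$ perpendicular to the chord $\vx\vy$ meets $\bS^2$ in the circle with diameter $\vx\vy'$; hence $\vx\vy$ is a double-normal pair if and only if the minor caps with diameters $\vx\vy'$ and $\vx'\vy$ contain no point of $V\cup V'$ in their interiors, i.e.\ $\vx\vy'$ and $\vx'\vy$ are edges of the \emph{weak Gabriel graph} of $V\cup V'$. This converts the global slab condition into a local empty-cap condition, and the theorem then reduces to the bound $\frac{15}{4}m-6$ on the number of edges of a weak Gabriel graph on $m$ points (Theorem~\ref{lemma:sphere}), proved by collapsing each class of mutually crossing arcs into its ``crossing polygon'', applying Euler's formula to the resulting embedded graph, and using that every angle of a crossing polygon is obtuse to get at most $3$ such polygons per vertex. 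This is where the constants come from: $\frac{15}{4}=3+\frac{3}{4}$, and then $\frac{17}{4}=\frac{1}{2}\bigl(\frac{15}{4}+\frac{15}{4}+1\bigr)$ after accounting separately for antipodal pairs inside $V$. Your converse computation is plausible in outline (the number of rectangles is $\frac{3}{4}\card{V}$, not $\frac{3}{2}\card{V}$), but as written the forward inequality is not established.
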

The vertex sets of the cube and the vertex set of the small rhombicuboctahedron are two examples where $N(V)=\frac{17}{4}\card{V}-6$ (with $\card{V}=8$ and $\card{V}=24$, respectively).
We asymptotically match this upper bound up to an error of $O(\sqrt{\card{V}})$.
\begin{theorem}\label{thm:sphereconstruction}
For each $n$, there exists a set of $n$ points on the $2$-sphere with at least $\frac{17}{4}n-O(\sqrt{n})$ double-normal pairs as $n\to\infty$.

For infinitely many values of $n$, there exist sets of $n$ points on the $2$-sphere with exactly $\frac{17n}{4}-6$ double-normal pairs.
\end{theorem}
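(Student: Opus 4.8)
The plan is to prove the two assertions separately, starting with the second, which is the one tied directly to Theorem~\ref{thm:sphere}.

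\textbf{Infinitely many exact examples.} By the converse part of Theorem~\ref{thm:sphere} it suffices to exhibit, for infinitely many values of $n$, a convex polytope inscribed in a $2$-sphere, symmetric about the centre of that sphere, each of whose $2$-faces is a rectangle or a triangle and such that every vertex lies on exactly three rectangular faces; its vertex set $V$ then has $N(V)=\tfrac{17}{4}\card V-6$ automatically. First I would record a piece of bookkeeping. Counting vertex--rectangle incidences gives $4R=3\card V$, where $R$ is the number of rectangular faces; writing $T$ for the number of triangular faces we get $2E=3\card V+3T$, and Euler's formula $\card V-E+R+T=2$ then forces $\card V=8+2T$. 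Thus the number of vertices is determined by $T$, so \emph{any} infinite list of such polytopes automatically involves infinitely many distinct values of $n$: it is enough to build infinitely many of them. The cube ($T=0$) and the small rhombicuboctahedron ($T=8$) are the two smallest, and I would extend the list to an infinite family by an explicit construction generalising these --- for instance a suitable ``elongation'' of the rhombicuboctahedron, or a scheme that glues finitely many appropriately scaled copies of such building blocks along rectangular faces and, by a symmetry-and-continuity argument in the gluing parameters, re-inscribes the result in a single sphere. The verification then reduces to (i)~checking central symmetry and inscribability in a sphere, and (ii)~a purely local check, one representative vertex per symmetry orbit, that exactly three rectangular faces meet there. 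Applying the converse of Theorem~\ref{thm:sphere} to each member finishes this part.

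\textbf{An approximately optimal configuration for every $n$.} Here only the weaker bound $\tfrac{17}{4}n-O(\sqrt n)$ is needed, so I would round, arranging the construction above so that consecutive sizes differ by $O(\sqrt n)$ (a family with quadratically growing sizes is enough). Given $n$, take the largest member with $n_k\le n$ vertices, so $n-n_k=O(\sqrt n)$, and add $n-n_k$ new points on the sphere, each placed very close to one of $n-n_k$ pairwise distinct, well-separated vertices. The key point is that each addition changes the number of double-normal pairs by only $O(1)$: adding a point $\vr$ near a vertex $\vv$ can destroy a double-normal pair $\set{\vp,\vq}$ only when $\vv$ lies on one of its two bounding hyperplanes, which forces $\vp$ (or $\vq$) onto a face through $\vv$ --- and there are only $O(1)$ such faces, carrying $O(1)$ vertices, each of which lies on only $O(1)$ double-normal pairs (in a member with many vertices the normal cone at a vertex has solid angle $O(1/\card V)$, so the cone of admissible double-normal directions at $\vv$ meets only $O(1)$ vertices of the set). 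Summing over the $n-n_k$ additions, at most $O(\sqrt n)$ double-normal pairs are lost, so the resulting $n$-point configuration has at least $\bigl(\tfrac{17}{4}n_k-6\bigr)-O(\sqrt n)=\tfrac{17}{4}n-O(\sqrt n)$ double-normal pairs.

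The main obstacle is the explicit construction of the infinite family: one must simultaneously guarantee central symmetry, inscribability in a sphere, the ``rectangle-or-triangle'' condition on all faces, and the ``exactly three rectangular faces'' condition at every vertex, and do so along a sequence of sizes with gaps $O(\sqrt n)$ so that the rounding argument applies. The remaining ingredients --- the Euler count, the orbitwise local vertex checks, and the estimate that inserting one extra point disturbs only boundedly many double-normal pairs --- should be routine.
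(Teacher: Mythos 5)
There is a genuine gap: the proposal never actually produces the infinite family of configurations, and that construction is the entire content of the theorem. Your reduction of the exact case to the converse part of Theorem~\ref{thm:sphere} is legitimate, and the Euler-formula bookkeeping $4R=3\card{V}$, $2E=3\card{V}+3T$, $\card{V}=8+2T$ is correct and even a nice observation; but at the decisive moment you write ``a suitable elongation of the rhombicuboctahedron, or a scheme that glues scaled copies along rectangular faces and, by a symmetry-and-continuity argument, re-inscribes the result in a single sphere,'' and then concede that this is ``the main obstacle.'' That is not a proof, and the suggested routes are genuinely problematic: a polytope obtained by elongating or gluing is in general not inscribable in a sphere at all (inscribability is a delicate global condition), and even when it is, one must keep every lateral face a Euclidean rectangle and preserve central symmetry simultaneously --- there is no continuity argument sketched that controls all of these constraints at once. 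The paper resolves exactly this difficulty with an explicit two-parameter construction (Lemma~\ref{construction}): latitude circles $C_0,\dots,C_{m-1}$ about the north pole carrying regular $2^ik$-gons all of the same spherical side length (which exists by a continuity argument localised near the pole, where the situation degenerates to the planar one), together with the antipodal copy; for $k=4$ this yields the cube, the small rhombicuboctahedron, and an infinite sequence with $N(V)=\frac{17}{4}\card{V}-6$. Without some such explicit construction your argument proves nothing.

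Two further points. First, your family would also need consecutive sizes differing by $O(\sqrt{n})$ for the rounding step; the paper gets this for free from the second parameter $k$, since $N(V)=\frac{17}{4}\card{V}-\frac{3}{2}k$ with $\card{V}=2(2^m-1)k$, so taking $k,2^m=\Theta(\sqrt{n})$ already gives $\frac{17}{4}n-O(\sqrt{n})$ before any points are added. Second, your perturbation step (one new point near each of $n-n_k$ well-separated vertices, each costing $O(1)$ pairs) relies on an unproved uniform bound on the degree of every vertex in the double-normal graph of the family; the ``normal cone has solid angle $O(1/\card{V})$'' claim is a statement about averages, not about every vertex. The paper sidesteps this entirely by placing \emph{all} $n-\card{V}$ extra points in the interior of a single spherical triangle of the Delaunay tiling, which destroys only the $6$ double-normal pairs determined by that triangle's vertices. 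So even granting the family, your second step would need repair, though it is repairable; the missing construction is the essential defect.
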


The paper is structured as follows.
In the next section, we present the proof of Theorem~\ref{thm:2d}.
In Section~\ref{sect:gabriel}, we introduce certain variants of Gabriel graphs for points on the $2$-sphere and study them using Euler's formula and the Delaunay tiling of these points.
We apply these results to prove Theorem~\ref{thm:0} in Section~\ref{sect:3}, Theorem~\ref{thm:sphere} in Section~\ref{sect:4}, and Theorem~\ref{thm:sphereconstruction} in Section~\ref{sect:5}.

\section{Proof of Theorem~\ref{thm:2d}}
This proof is based on Perles' proof that in a geometric graph where any two non-adjacent edges cross, the number of edges is at most the number of vertices \cite[Theorem~9]{Pach}.

Let $V$ be a set of $n$ points in the plane.
We draw its double-normal graph by joining each double-normal pair with a straight-line segment.
In the sequel, if it leads to no confusion, these segments will also be referred to as ``edges''. (Note that the resulting drawing is not necessarily a ``geometric graph'' in the sense the term is usually used in the literature \cite[Chapter~10]{CDGHandbook}, because it may have a vertex which lies in the relative interior of an edge.)
The following properties of this drawing are easily verified:
\begin{claim}\label{five}
\mbox{}
\begin{enumerate}[\textup{(}i\textup{)}]
\item\label{2} Two edges cannot lie on the same line.
In particular, two edges can intersect in at most one point.
\item\label{1} If $\vx\in V$ lies in the relative interior of an edge $\vy\vz\in E$, then $\vx$ is joined to at most one vertex $\vv\in V$, and then $\vx\vv$ must be perpendicular to $\vy\vz$.
\item\label{3} Any two disjoint edges are opposite edges of some rectangle.
\item\label{4a} No vertex lies in the convex hull of its neighbours.
\item\label{4b} All non-isolated vertices are vertices of the convex hull of $V$.
\end{enumerate}
\end{claim}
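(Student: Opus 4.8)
The plan is to verify each of the five properties directly from the definitions of the double-normal graph and the straight-line drawing, exploiting the fact that an edge $\vp\vq$ records a pair of parallel hyperplanes (lines, in the plane) $H_{\vp}, H_{\vq}$ perpendicular to $\vp\vq$ with all of $V$ sandwiched in the closed strip between them.

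First, for (\ref{2}): if two edges $\vp\vq$ and $\vr\vs$ lay on the same line $\ell$, then the strip for $\vp\vq$ is bounded by lines perpendicular to $\ell$ through $\vp$ and $\vq$, and likewise for $\vr\vs$; since all four points lie on $\ell$, and each pair must be the extreme pair of $V$ in the direction of $\ell$, we get $\set{\vp,\vq}=\set{\vr,\vs}$ (the two outermost points of $V\cap\ell$ along $\ell$ are uniquely determined). Hence distinct edges lie on distinct lines and meet in at most one point. For (\ref{1}): suppose $\vx$ is in the relative interior of edge $\vy\vz$, and $\vx$ is joined to some $\vv$. The strip for $\vx\vv$ has bounding line $H_{\vx}$ through $\vx$ perpendicular to $\vx\vv$, with $V$ on one side; in particular $\vy$ and $\vz$ lie (weakly) on one side of $H_{\vx}$. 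But $\vx$ lies strictly between $\vy$ and $\vz$ on the line $\vy\vz$, so $\vy$ and $\vz$ can be on the same closed side of a line through $\vx$ only if that line is the line $\vy\vz$ itself --- i.e.\ $H_{\vx}$ contains both $\vy$ and $\vz$, forcing $\vx\vv\perp\vy\vz$. If $\vx$ were joined to a second neighbour $\vv'$, the same argument gives $\vx\vv'\perp\vy\vz$ as well, so $\vv,\vx,\vv'$ are collinear on the line through $\vx$ perpendicular to $\vy\vz$; but then the edges $\vx\vv$ and $\vx\vv'$ lie on the same line, contradicting (\ref{2}) unless $\vv=\vv'$. So $\vx$ has exactly one neighbour, as claimed.

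Next, (\ref{3}): let $\vp\vq$ and $\vr\vs$ be disjoint edges. Then $V\subseteq \operatorname{strip}(\vp\vq)\cap\operatorname{strip}(\vr\vs)$. I claim the two strips are mutually perpendicular, so their intersection is a rectangle (possibly degenerate, but nondegenerate here) having $\vp,\vq$ on one pair of opposite sides and $\vr,\vs$ on the other. To see the strips cannot be ``oblique'': $\vr$ and $\vs$ lie in $\operatorname{strip}(\vp\vq)$, so the segment $\vr\vs$ lies in it; but $\vp$ and $\vq$ are on the two boundary lines $H_{\vp},H_{\vq}$ of this strip, and since $\vp\vq$ is a double-normal pair its endpoints are the two points of $V$ achieving the width of the strip. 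Using that $\vr\vs$ is itself a double-normal edge, $\vp$ and $\vq$ lie in $\operatorname{strip}(\vr\vs)$, so $\vp\vq$ and $\vr\vs$ each lie in the other's strip and each has both endpoints realising the extreme positions in its own perpendicular direction. A short case analysis of the possible angle between the two strip directions (using that one segment crossing a strip with both endpoints on or inside, while being a diameter-type extreme chord in its own direction) forces the perpendicularity; then $\vp,\vq,\vr,\vs$ are the four feet where the two mutually perpendicular strips touch, i.e.\ opposite-edge pairs of the rectangle $\operatorname{strip}(\vp\vq)\cap\operatorname{strip}(\vr\vs)$. Finally, (\ref{4a}) and (\ref{4b}): if $\vx$ lay in $\conv$ of its neighbours $\vv_1,\dots,\vv_k$, pick the neighbour $\vv_i$ with $\myangle\,\vv_j\vx\vv_i$ as large as possible over $j$; since $\vx\in\conv\set{\vv_j}$ the neighbours cannot all lie in an open halfplane bounded by a line through $\vx$, so there are neighbours $\vv_i,\vv_j$ with $\myangle\,\vv_i\vx\vv_j\geq\pi$ --- more carefully, $\vx$ lies in the closed strip of $\vx\vv_i$ with $H_{\vx}\perp\vx\vv_i$ through $\vx$, and $\vv_i$ is on $H_{\vx}$ while all of $V$, hence $\vv_j$, is on the far side; doing this for every neighbour shows each neighbour lies in the closed halfplane ``opposite'' to every other, which is impossible for a point in the convex hull of those neighbours unless $k\leq 1$ --- contradiction with $\vx$ being non-isolated and inside the hull. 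This proves (\ref{4a}); and (\ref{4b}) is immediate because if a non-isolated $\vx$ were not a vertex of $\conv V$ it would lie in $\conv(V\setminus\set{\vx})$, and an averaging/extreme-direction argument (or directly: $\vx\in\operatorname{strip}(\vx\vv)$ with $\vx$ on the boundary line $H_{\vx}$, so $\vx$ is extreme in direction $\vx\vv$, hence on $\conv V$) gives the claim.

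The routine verifications (\ref{2}), (\ref{1}), (\ref{4b}) are short; the genuine obstacle is (\ref{3}), where one must rule out oblique pairs of strips. The clean way to do this is: since $\vp\vq$ is double-normal, projecting $V$ orthogonally onto the line $\vp\vq$ puts $\vp$ and $\vq$ at the two extremes; in particular the orthogonal projections of $\vr$ and $\vs$ onto line $\vp\vq$ both lie in the segment $[\vp,\vq]$. Symmetrically, the projections of $\vp,\vq$ onto line $\vr\vs$ lie in $[\vr,\vs]$. A direct computation with these two projection conditions (writing $\vp-\vq$ and $\vr-\vs$ and the angle $\theta$ between them) shows that both can hold simultaneously only when $\theta=\pi/2$; geometrically, each chord must cross the other's strip "straight across," which forces the directions to be perpendicular. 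Once perpendicularity is in hand, the rectangle in (\ref{3}) is just the intersection of the two strips and the four named points are its side-midpoints' feet --- more precisely, the four points where the two pairs of parallel lines are tangent to $V$ --- so they form two opposite sides of that rectangle, completing the proof of Lemma~\ref{five}.
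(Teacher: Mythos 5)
Your arguments for (i), (ii), (iv) and (v) are essentially fine (the paper offers no proof, calling the whole lemma ``easily verified''); two small points: in (ii) the conclusion should be ``at most one neighbour'' rather than ``exactly one'', and in (v) what you actually prove is that a non-isolated vertex lies on the \emph{boundary} of $\conv V$ (it lies on a supporting line), which is the property the paper actually uses. The problem is item (iii), which you yourself identify as the only non-routine step, and there your argument is not just incomplete but aims at a false intermediate claim. Two disjoint double-normal edges are \emph{parallel}, not perpendicular: opposite sides of a rectangle are parallel, so the very conclusion you are trying to reach forces $\vp\vq\parallel\vr\vs$. Your ``clean'' computation cannot work, because the two projection conditions you isolate are satisfied by the genuine rectangle configuration with $\theta=0$ (take $\vp=(0,0)$, $\vq=(1,0)$, $\vr=(0,1)$, $\vs=(1,1)$: the projections of $\vr,\vs$ onto the line $\vp\vq$ are $\vp$ and $\vq$ themselves, and symmetrically), so they certainly do not force $\theta=\pi/2$. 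Moreover your computation never invokes the hypothesis that the segments are disjoint, and without it the statement is false -- the two diagonals of a square are perpendicular double-normal edges, but they cross. Finally, even if the two strips were perpendicular, knowing that $\vp,\vq,\vr,\vs$ each lie on one side of the rectangle $\operatorname{strip}(\vp\vq)\cap\operatorname{strip}(\vr\vs)$ would not make $\vp\vq$ and $\vr\vs$ opposite sides of \emph{a} rectangle; the rectangle in the lemma is the quadrilateral on the four points themselves, not the intersection of the strips.

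The standard argument for (iii) runs as follows. Double-normality of $\vp\vq$ applied to the points $\vr$ and $\vs$ gives $\langle\vr-\vp,\vq-\vp\rangle\geq 0$, etc., i.e.\ $\myangle\vr\vp\vq$, $\myangle\vs\vp\vq$, $\myangle\vr\vq\vp$, $\myangle\vs\vq\vp\leq\pi/2$, and symmetrically the four angles $\myangle\vp\vr\vs$, $\myangle\vq\vr\vs$, $\myangle\vp\vs\vr$, $\myangle\vq\vs\vr\leq\pi/2$. If one of the four points, say $\vs$, lay in $\conv\set{\vp,\vq,\vr}$, the three angles around $\vs$ would sum to $2\pi$ while $\myangle\vp\vs\vr\leq\pi/2$ and $\myangle\vq\vs\vr\leq\pi/2$, forcing $\myangle\vp\vs\vq\geq\pi$, which is impossible for a point off the segment $\vp\vq$. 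Hence the four points are in convex position, and since the segments $\vp\vq$ and $\vr\vs$ are disjoint they must be opposite \emph{sides} of the convex quadrilateral (were they both diagonals, they would cross). The four interior angles of this quadrilateral are among the eight listed above, hence each is at most $\pi/2$; since they sum to $2\pi$, all four equal $\pi/2$ exactly, and the quadrilateral is a rectangle with $\vp\vq$ and $\vr\vs$ as opposite sides. This is where disjointness enters, and it yields parallelism (together with equality of lengths) rather than perpendicularity.
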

We define the edge $\vx\vy$ to be a \emph{rightmost edge} at the vertex $\vx$ if the half-plane bounded by the line $\vx\vy$ which lies on the right-hand side of the vector $\overrightarrow{\vx\vy}$ contains no point of $S$ in its interior.
Colour the (unique) rightmost edge of each non-isolated vertex \emph{red}.
By Lemma~\ref{five}(\ref{4a}), each such vertex has a rightmost edge.
This gives at most $n$ red edges.
Colour all the remaining edges \emph{blue}.
We next show
\begin{claim}\label{5}
The blue edges form a matching.
\end{claim}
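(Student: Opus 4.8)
The plan is to argue by contradiction, so suppose some vertex $\vx$ is incident to two blue edges $\vx\vy$ and $\vx\vz$. By construction a blue edge is the rightmost edge at neither of its endpoints, so neither $\vx\vy$ nor $\vx\vz$ is the rightmost edge at $\vx$; call the latter $\vx\vu$, so $\vu\notin\set{\vy,\vz}$ and $\vx$ has at least three neighbours. Recall that, by Lemma~\ref{five}(\ref{4a}), the rightmost edge at a vertex is its clockwise‑extreme edge. Place $\vx$ at the origin of a Cartesian coordinate system with $\overrightarrow{\vx\vu}$ pointing in the positive $x$‑direction. Since $\vx\vu$ is a double‑normal pair, $V$ lies in the vertical strip $\set{0\le x\le\length{\vx\vu}}$; and applying the double‑normal condition of $\vx\vu$ to the counter‑clockwise‑most edge at $\vx$ shows that every edge at $\vx$ lies in a wedge of angle at most $\pi/2$ obtained by rotating $\overrightarrow{\vx\vu}$ counter‑clockwise. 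Writing $\alpha$ and $\beta$ for the angles of $\overrightarrow{\vx\vy}$ and $\overrightarrow{\vx\vz}$ with the positive $x$‑direction, and swapping $\vy$ and $\vz$ if necessary, we have $0<\alpha<\beta\le\pi/2$ (the strict inequalities use Lemma~\ref{five}(\ref{2})); hence $\angle\vu\vx\vy=\alpha<\pi/2$ and $\angle\vy\vx\vz=\beta-\alpha<\pi/2$.

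The tool throughout is Lemma~\ref{five}(\ref{3}): two vertex‑disjoint edges are opposite sides of a rectangle. I would apply it three times. \emph{Step 1: the rightmost edge at $\vy$ is $\vy\vz$.} If not, it is $\vy\vw$ for some $\vw\notin\set{\vx,\vz}$, and then $\vy\vw$ and $\vx\vz$ are opposite sides of a rectangle with vertex pairs $\set{\vy,\vw}$ and $\set{\vx,\vz}$; there are two possible cyclic orders. In one, $\vx\vy$ is a side meeting the side $\vx\vz$ at a right angle, forcing $\angle\vy\vx\vz=\pi/2$, contrary to $\beta-\alpha<\pi/2$. In the other, the parallelogram relation gives $\vw=\vy-\vz$, so $\overrightarrow{\vy\vw}$ points opposite to $\overrightarrow{\vx\vz}$ while $\overrightarrow{\vy\vx}$ points opposite to $\overrightarrow{\vx\vy}$; since $\alpha<\beta$, the vector $\overrightarrow{\vy\vx}$ is then a \emph{clockwise} rotation of $\overrightarrow{\vy\vw}$ through $\beta-\alpha\in(0,\pi/2)$, which contradicts that $\vy\vw$ is the clockwise‑extreme edge at $\vy$.

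\emph{Step 2: the rightmost edge at $\vz$ is $\vz\vy$.} One cyclic order of the rectangle again forces $\angle\vy\vx\vz=\pi/2$; in the other, the parallelogram relation and the rectangle's right angle at $\vx$ force $\overrightarrow{\vx\vw'}$ to point in direction $\alpha+\pi/2\in(\pi/2,\pi)$, where $\vw'$ is the offending neighbour of $\vz$, so $\vw'$ has negative $x$‑coordinate, contradicting $V\subseteq\set{0\le x\le\length{\vx\vu}}$. \emph{Step 3: the contradiction.} Now $\vx\vu$ and the edge $\vy\vz$ (which exists by Step~1) are vertex‑disjoint, hence opposite sides of a rectangle. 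One cyclic order makes $\vx\vy$ meet $\vx\vu$ at a right angle, forcing $\alpha=\pi/2$, impossible. In the other, $\vx\vz$ meets $\vx\vu$ at a right angle and the parallelogram relation gives $\vu=\vy-\vz$; since $\overrightarrow{\vx\vu}$ points in the positive $x$‑direction, $\vy$ and $\vz$ have equal $y$‑coordinate, so $\overrightarrow{\vz\vy}$ points in the positive $x$‑direction and $\overrightarrow{\vz\vx}$ in the negative $y$‑direction. But by Step~2 the edge $\vz\vy$ is clockwise‑extreme at $\vz$, so every neighbour of $\vz$—in particular $\vx$—lies in the wedge of angle at most $\pi/2$ counter‑clockwise from $\overrightarrow{\vz\vy}$, which does not contain the negative $y$‑direction. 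This final contradiction would complete the proof.

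The only delicate point is the bookkeeping. Each of the three rectangle arguments presents two cyclic orders to rule out, and the appropriate weapon changes from branch to branch: sometimes a strict angle inequality coming from $0<\alpha<\beta\le\pi/2$, sometimes the clockwise‑extremality built into the definition of ``rightmost'', and sometimes the strip containment $V\subseteq\set{0\le x\le\length{\vx\vu}}$. No individual branch is hard; there are simply six of them to dispatch, and one must be careful not to confuse a genuinely impossible perpendicularity with a merely degenerate one.
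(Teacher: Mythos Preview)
Your argument is essentially correct, though two points are underjustified. First, before each use of Lemma~\ref{five}(\ref{3}) you need the two edges to be disjoint \emph{as segments}, not merely vertex-disjoint; this can always be checked here using the rightmost property together with Lemmas~\ref{five}(\ref{2}) and~(\ref{1}), but you should say so. Second, in Step~2 the right angle at $\vx$ only pins the direction of $\overrightarrow{\vx\vw'}$ down to $\alpha\pm\pi/2$; ruling out $\alpha-\pi/2$ needs a further remark (for instance, that $\vw'\in V$ forces a nonnegative $y$-coordinate).

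Your route is also much longer than the paper's. The paper never introduces coordinates or the rightmost edge $\vx\vu$ at the common vertex. With the common vertex called $\vb$ and the other endpoints $\va,\vc$ (labelled so that $\va$ lies to the left of $\overrightarrow{\vb\vc}$), it argues: the rightmost edge at $\va$ is disjoint from $\vb\vc$, hence parallel to it by Lemma~\ref{five}(\ref{3}); the rightmost edge at $\vb$ is in turn disjoint from the rightmost edge at $\va$, hence parallel to \emph{it}, and therefore parallel to $\vb\vc$ as well. But this edge passes through $\vb$, so it lies on the line through $\vb\vc$ and by Lemma~\ref{five}(\ref{2}) must equal $\vb\vc$, which is blue---a contradiction. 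Two applications of the rectangle lemma, no case splits. Your six branches arise because you track the full cyclic order of each rectangle instead of extracting only the parallelism that Lemma~\ref{five}(\ref{3}) already gives.
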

\begin{proof}
Suppose to the contrary that two blue edges have a common endpoint $\vb$.
We label the other endpoints $\va$ and $\vc$ so that $\va$ lies on the left-hand side of the vector $\overrightarrow{\vb\vc}$ (Fig.~\ref{fig3}).
The rightmost edge at $\va$ does not intersect $\vb\vc$, so forms a rectangle together with $\vb\vc$, by Lemma~\ref{five}(\ref{3}).
\begin{figure}
\centering
\begin{tikzpicture}[scale=1.1, thick, rotate=-15]
\draw[dashed, red, ultra thick] (0,1) -- (2,1) node[black, right] {$\va$};
\draw[dashed, red, ultra thick] (0,0) node[black, below] {$\vb$} -- (2,-0.4);
\draw[blue] (0,0) -- (2,0) node[black, right] {$\vc$};
\draw[blue] (0,0) -- (2,1);
\end{tikzpicture}
\caption{Blue edges must be disjoint. (In this and subsequent figures, red edges are drawn dashed and blue lines solid.)}\label{fig3}
\end{figure}
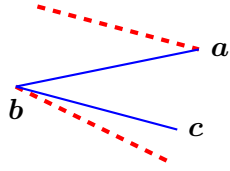
The rightmost edge at $\vb$ will also be disjoint from the rightmost edge at $\va$.
By Lemma~\ref{five}(\ref{3}), they also form a rectangle.
However, then the rightmost edge at $\vb$ coincides with $\vb\vc$, contradicting Lemma~\ref{five}(\ref{2}).
\end{proof}
Denoting the set of red edges by $R$ and the set of blue edges by $B$, we now already have
\begin{equation}
\card{E} = \card{R} + \card{B} \leq n+ \frac{n}{2}, \label{count}
\end{equation}
which is the required inequality when $n$ is even.

In the case where $n$ is odd, we only obtain $\card{E}\leq 3\lfloor n/2\rfloor +1$.
To finish the odd case, we have to analyze the graph $G$ further.
Along the way, we characterize equality in \eqref{count} for even $n$.
We say that two edges \emph{cross} if they share interior points.
\begin{claim}\label{6}
Any two blue edges cross.
\end{claim}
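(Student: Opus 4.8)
The plan is a proof by contradiction, using the rightmost‑edge machinery as in the proof of Lemma~\ref{5}. Suppose $e$ and $f$ are blue edges that do not cross. I would first reduce to the case that $e$ and $f$ are disjoint as closed segments, then invoke Lemma~\ref{five}(\ref{3}) to realise them as a pair of opposite sides of a rectangle, and finally produce a contradiction by exhibiting a corner of that rectangle which can have no rightmost edge, contrary to Lemma~\ref{five}(\ref{4a}).

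For the reduction: by Lemma~\ref{5} the edges $e$ and $f$ have no common endpoint, so if they meet but do not cross — i.e.\ share no point interior to both — some endpoint $\vx$ of one of them lies in the relative interior of the other. By Lemma~\ref{five}(\ref{1}) such an $\vx$ is joined to at most one vertex, hence to exactly one (as $\vx$ is an endpoint of $e$ or $f$); that unique edge at $\vx$ is then its rightmost edge and is coloured red, contradicting that it equals $e$ or $f$. So $e$ and $f$ are disjoint segments, and by Lemma~\ref{five}(\ref{3}) they are opposite sides of a rectangle. Label its vertices so that $e=\vp\vq$, $f=\vr\vs$, and $\vp,\vq,\vr,\vs$ occur in this counterclockwise order, and choose coordinates in which $\vp=(0,0)$, $\vq=(a,0)$, $\vr=(a,b)$, $\vs=(0,b)$ with $a,b>0$. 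Then $\vp,\vq,\vr,\vs\in V$, and since $e=\vp\vq$ is a double-normal pair, $V$ lies in the strip $0\le x\le a$; in particular every point of $V$ has nonnegative first coordinate.

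Now consider the rightmost edge $\vp\vw$ at $\vp$, which exists by Lemma~\ref{five}(\ref{4a}): since $\vp$ does not lie in the convex hull of its neighbours, the neighbours of $\vp$ occupy an arc of less than $180^\circ$ as seen from $\vp$, so a clockwise-extreme incident edge is well defined. Every neighbour of $\vp$ has nonnegative first coordinate, so the edge directions at $\vp$ all lie in $[-90^\circ,90^\circ]$; one of them is $\vp\vq$, pointing in direction $0^\circ$. Since $e$ is blue, $\vp\vw\ne\vp\vq$, and by Lemma~\ref{five}(\ref{2}) no two edges at $\vp$ point in the same direction, so the clockwise-extreme edge $\vp\vw$ has direction in the half-open range $[-90^\circ,0^\circ)$; in particular the second coordinate $w_y$ of $\vw$ is negative. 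Finally, $\vp\vw$ is itself a double-normal pair, so $V$ lies in the closed strip bounded by the two lines perpendicular to $\vp\vw$ through $\vp$ and through $\vw$; writing $\vp$ as the origin, this gives $\vv\cdot\vw\ge 0$ for every $\vv\in V$. Taking $\vv=\vs=(0,b)$ yields $\vs\cdot\vw=b\,w_y<0$, a contradiction. Hence $e$ and $f$ must cross.

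The step I expect to be most delicate is the assertion $w_y<0$ — that the rightmost edge at $\vp$ genuinely points clockwise of $\vp\vq$. This uses the double-normal property of $e$ to keep $V$, hence every neighbour of $\vp$, in $\{x\ge 0\}$; Lemma~\ref{five}(\ref{2}) to rule out a second edge at $\vp$ collinear with $\vp\vq$; and Lemma~\ref{five}(\ref{4a}) to make ``clockwise-extreme'' meaningful and strictly preceding $\vp\vq$. One should also check the borderline direction $-90^\circ$, where $w_y$ is still negative. The rest is a one-line dot-product computation.
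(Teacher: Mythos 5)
Your proof is correct. The reduction to the case of disjoint segments is exactly the paper's: if an endpoint of one blue edge lies in the relative interior of the other, Lemma~\ref{five}(\ref{1}) forces that endpoint to have degree one, so its unique edge is its rightmost edge and hence red. Where you diverge is in how you close the disjoint case. The paper takes the red (rightmost) edge at one corner $\vb$ of the rectangle supplied by Lemma~\ref{five}(\ref{3}), notes that it too is disjoint from the opposite blue edge, applies Lemma~\ref{five}(\ref{3}) a second time to obtain a second rectangle, and concludes that two distinct edges at $\vb$ are collinear, contradicting Lemma~\ref{five}(\ref{2}). You instead pin down the direction of the rightmost edge $\vp\vw$ at a corner --- strictly clockwise of $\vp\vq$ yet confined to the closed half-plane forced by the strip of the double-normal pair $\vp\vq$ --- and then violate the strip condition of the double-normal pair $\vp\vw$ itself at the fourth rectangle vertex $\vs$. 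Both arguments hinge on the same direction analysis of the rightmost edge at a rectangle corner; your version has the merit of making explicit the step the paper leaves to its figure (why the red edge at the corner avoids the other blue edge), and your dot-product finish trades the second application of Lemma~\ref{five}(\ref{3}) for a direct appeal to the definition of a double-normal pair. The only delicate point, the strict inequality $w_y<0$, is exactly where you flag it, and your justification (the strip $0\le x\le a$, Lemma~\ref{five}(\ref{2}) to exclude a second edge collinear with $\vp\vq$, and the boundary direction check) is sound.
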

\begin{proof}
Suppose to the contrary that the blue edges $\va\vb$ and $\vc\vd$ do not cross.
By Lemma~\ref{5}, they do not share an endpoint.
Then either the segments $\va\vb$ and $\vc\vd$ are disjoint, or one of the segments, say $\va\vb$, has an endpoint, say $\va$, in the interior of the other segment $\vc\vd$ (Fig.~\ref{fig4}).
\begin{figure}
\centering
\begin{tikzpicture}[thick, scale=1.1]
\draw[blue] (0,1) node[black,left] {$\va$} -- (2,1) node[black,right] {$\vb$};
\draw[blue] (0,0) node[black,left] {$\vc$} -- (2,0) node[black,right] {$\vd$};
\begin{scope}[xshift=4cm]
\draw[blue] (0,1) node[black,left] {$\vc$} -- (2,1) node[black,right] {$\vd$};
\draw[blue] (1,1) node[black,above] {$\va$} -- (1,0) node[black,right] {$\vb$};
\end{scope}
\end{tikzpicture}
\caption{Two blue edges cannot be disjoint, nor can the endpoint of one lie in the interior of the other.}\label{fig4}
\end{figure}
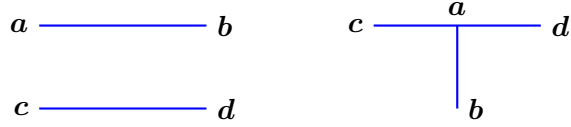
In the first case, the red edge at $\vb$ will be disjoint from $\vc\vd$, hence will form a rectangle with $\vc\vd$.
Since $\va\vb$ also forms a rectangle with $\vc\vd$, we obtain a contradiction.

In the second case, by Lemma~\ref{five}(\ref{1}), $\va$ has degree $1$, so $\va\vb$ is a red edge, which is a contradiction.
\end{proof}
\begin{claim}\label{7}
If a blue edge and a red edge do not have a common endpoint, then they cross.
\end{claim}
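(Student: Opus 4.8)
The plan is to argue by contradiction in the spirit of the proofs of Lemmas~\ref{5} and~\ref{6}, using rightmost edges together with Lemma~\ref{five}(\ref{2}) (no two edges are collinear) and Lemma~\ref{five}(\ref{3}) (two disjoint edges are opposite sides of a rectangle). Suppose a blue edge $\va\vb$ and a red edge $\vu\vv$ have no common endpoint and do not cross. If an endpoint of $\va\vb$, say $\va$, lies in the relative interior of $\vu\vv$, then by Lemma~\ref{five}(\ref{1}) the vertex $\va$ has degree~$1$, so $\va\vb$ is the unique rightmost edge at $\va$ and hence red, a contradiction. If instead an endpoint of $\vu\vv$, say $\vu$, lies in the relative interior of $\va\vb$, then by Lemma~\ref{five}(\ref{1}) we have $\vu\vv\perp\va\vb$, so $\va$ and $\vb$ lie strictly on opposite sides of the line through $\vu$ and $\vv$; but $\vu\vv$ is a rightmost edge at one of its endpoints, so all of $V$, in particular both $\va$ and $\vb$, lies in one closed half-plane bounded by that line, a contradiction.

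It remains to treat the case in which the segments $\va\vb$ and $\vu\vv$ are disjoint. By Lemma~\ref{five}(\ref{3}) they are then the two opposite sides of a rectangle $R$. I claim that in fact $V\subseteq R$, and this finishes the proof: $\va\vb$ is a side of the rectangle $R\supseteq V$, so the line through $\va$ and $\vb$ supports $V$; since $\va,\vb\in V$ lie on this line, $\va\vb$ satisfies the defining condition of a rightmost edge at $\va$ or at $\vb$ (according to which closed side of $\overrightarrow{\va\vb}$ contains $V$), so $\va\vb$ is red, contradicting that it is blue.

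To prove $V\subseteq R$, choose coordinates with $\va=(0,0)$ and $\vb=(L,0)$, $L>0$; then $\{\vu,\vv\}=\{(0,h),(L,h)\}$ for some $h\neq0$, where $(0,h)$ is the corner of $R$ joined to $\va$ and $(L,h)$ the corner joined to $\vb$. Since $\va\vb$ is a double-normal pair, $V$ lies in the strip $0\le x\le L$, so it suffices to show $V$ lies between the lines $y=0$ and $y=h$. As $\vu\vv$ is red it is a rightmost edge at $\vu$ or at $\vv$; interchanging the names $\vu,\vv$ if necessary, assume it is rightmost at $\vu$. Since $\overrightarrow{\vu\vv}$ is horizontal and $\va\in V$ must lie on the appropriate closed side of it, either $\vu=(L,h)$ with $h>0$ and $V\subseteq\{y\le h\}$, or $\vu=(0,h)$ with $h<0$ and $V\subseteq\{y\ge h\}$. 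Take the case $h>0$ (the case $h<0$ is analogous, with the roles of $\va$ and $\vb$ and of the two vertical sides of $R$ interchanged). Because $\va\vb$ is blue, the rightmost edge at $\vb$ is not $\vb\va$; testing it against $\va$ (due west of $\vb$) and $\vu$ (due north of $\vb$), and using $V\subseteq\{x\le L\}$, forces it to lie on the line $x=L$ and point upward from $\vb$. Similarly the rightmost edge at $\vv=(0,h)$ is forced onto the line $x=0$, pointing downward from $\vv$. These two rightmost edges lie on distinct vertical lines, so by Lemma~\ref{five}(\ref{3}) they are opposite sides of a rectangle; matching up their four endpoints shows they must be exactly $\vb\vu$ and $\vv\va$. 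In particular $\vv\va$ is an edge, hence a double-normal pair; being vertical, it confines $V$ to the strip $0\le y\le h$, and together with $0\le x\le L$ this gives $V\subseteq R$.

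The delicate point is this last step, establishing $V\subseteq R$: the condition that $\vu\vv$ is a rightmost edge only confines $V$ to a \emph{half}-infinite strip, unbounded on the side away from $\vu\vv$, so the absence of points of $V$ beyond the blue edge $\va\vb$ must be obtained indirectly, through the rectangle produced by the rightmost edges at $\vb$ and $\vv$ (respectively at $\va$ and $\vv$ when $h<0$). One should also check the routine bookkeeping: each of $\va$, $\vb$, $\vv$ is non-isolated and so has a rightmost edge by Lemma~\ref{five}(\ref{4a}); the rectangle $R$ is non-degenerate, so $L\neq0$ and $h\neq0$; and the rightmost edges produced along the way are genuinely distinct edges, so that Lemmas~\ref{five}(\ref{2}) and~\ref{five}(\ref{3}) apply to them.
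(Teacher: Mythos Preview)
Your proof is correct. Your handling of the two ``endpoint in interior'' cases is fine; in fact, for the case where an endpoint of the red edge lies in the interior of the blue edge, your direct use of the rightmost-edge half-plane condition is cleaner than the paper's sketch, which argues instead that the rightmost edge at $\vb$ would have to form a rectangle with $\vc\vd$ and this is impossible.

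For the disjoint case, however, you work considerably harder than necessary. The paper simply repeats the argument of Lemma~\ref{6} verbatim: once $\va\vb$ and $\vu\vv$ form a rectangle, the rightmost edge at $\vb$ (which is not $\vb\va$, since $\va\vb$ is blue) is forced to stay on the $\va\vb$-side of $\vu\vv$ and hence is also disjoint from $\vu\vv$; by Lemma~\ref{five}(\ref{3}) it too forms a rectangle with $\vu\vv$, sharing the corner $\vb$ with the first rectangle, so it would have to coincide with $\va\vb$, contradicting Lemma~\ref{five}(\ref{2}). That is one rightmost edge and one rectangle.

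You instead pursue the stronger intermediate claim $V\subseteq R$. To get it you pin down \emph{two} rightmost edges (at $\vb$ and at $\vv$), match them into a rectangle to identify them as $\vb\vu$ and $\vv\va$, and only then use that $\vv\va$ is a double-normal pair to close the strip in the $y$-direction. This is valid, and it yields a nice structural consequence ($V\subseteq R$) that the paper never states, but for the bare contradiction it is more machinery than needed: the paper reaches a contradiction before ever bounding $V$ from the far side of $\va\vb$.
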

\begin{proof}
Otherwise, one of the three cases depicted in Fig.~\ref{fig5} will occur, where $\va\vb$ is blue and $\vc\vd$ is red, say.
In each case we arrive at a contradiction, as in the proof of Lemma~\ref{6}.
\begin{figure}
\centering
\begin{tikzpicture}[thick, scale=1.1]
\draw[blue] (0,1) node[black,left] {$\va$} -- (2,1) node[black,right] {$\vb$};
\draw[dashed,red, ultra thick] (0,0) node[black,left] {$\vc$} -- (2,0) node[black,right] {$\vd$};
\draw[dashed,red, ultra thick] (2,1) -- (1.4,1.3);
\begin{scope}[xshift=3.3cm]
\draw[dashed,red, ultra thick] (0,1) node[black,left] {$\vc$} -- (2,1) node[black,right] {$\vd$};
\draw[blue] (1,1) node[black,above] {$\va$} -- (1,0) node[black,right] {$\vb$};
\end{scope}
\begin{scope}[xshift=6.6cm]
\draw[blue] (0,1) node[black,left] {$\va$} -- (2,1) node[black,right] {$\vb$};
\draw[dashed,red, ultra thick] (1,1) node[black,above] {$\vc$} -- (1,0) node[black,right] {$\vd$};
\draw[dashed,red, ultra thick] (2,1) -- (1.4,1.3);
\end{scope}
\end{tikzpicture}
\caption{A blue edge and a red edge cannot be disjoint, nor can the endpoint of one lie in the interior of the other.}\label{fig5}
\end{figure}
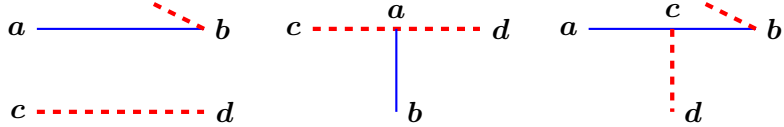
(In the last case, the red edge at $\vb$ would have to form a rectangle with $\vc\vd$, by Lemma~\ref{five}(\ref{3}), which is impossible.)
\end{proof}
We now characterize the case of equality when $n$ is even.
Assume $n$~is even and $\card{E}=n+n/2$.
To be consistent with \eqref{count}, there must be exactly $n$~red edges and $n/2$ blue edges.
In particular, no red edge is a rightmost edge of both of its endpoints, and no vertex is isolated.

Since the $n/2$ blue edges are pairwise crossing (Lemma~\ref{6}), the vertices have a natural cyclic order $\vp_1,\vp_2,\dots,\vp_n$ such that the blue edges are $\vp_i\vp_{i+n/2}$ ($i=1,\dots,n/2$); see Fig.~\ref{fig6}.
\begin{figure}
\centering
\begin{tikzpicture}[scale=0.67, thick]
\foreach \point/\theta in {p1/0,p2/45,p3/90,p4/135,p5/180,p6/225,p7/270,p8/315} {
    \coordinate (\point) at (\theta+22.5:2cm);
}
\draw[dashed,red, ultra thick] (p1)--(p6);
\draw[dashed,red, ultra thick] (p2)--(p5);
\draw[blue] (p1)--(p5);
\draw[blue] (p2)--(p6);
\draw[blue] (p3)--(p7);
\draw[blue] (p4)--(p8);
\draw[fill] (p1)  node[right] {$\vp_1$} circle (1mm);
\draw[fill] (p2)  node[right] {$\vp_2$} circle (1mm);
\draw[fill] (p3)  node[left] {$\vp_3$} circle (1mm);
\draw[fill] (p4)  node[left] {$\vp_4$} circle (1mm);
\draw[fill] (p5)  node[left] {$\vp_{1+4}$} circle (1mm);
\draw[fill] (p6)  node[left] {$\vp_{2+4}$} circle (1mm);
\draw[fill] (p7)  node[right] {$\vp_{3+4}$} circle (1mm);
\draw[fill] (p8)  node[right] {$\vp_{4+4}$} circle (1mm);
\end{tikzpicture}
\caption{Equality in the even case ($n=8$)}\label{fig6}
\end{figure}
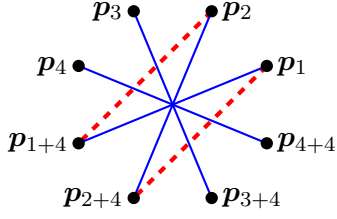

Let $i\in\{1,\dots,n\}$.
Since the red edge at $\vp_i$ is not disjoint from the blue edge $\vp_{i-1}\vp_{i-1+n/2}$ (Lemma~\ref{7}), it has to be the edge $\vp_i\vp_{i-1+n/2}$ (with subscripts taken modulo $n$).
This determines all the red edges.

The red edges $\vp_i\vp_{i-1+n/2}$ and $\vp_{i-1}\vp_{i+n/2}$ are disjoint, so by Lemma~\ref{five}(\ref{3}), they form a rectangle with diagonals the blue edges $\vp_i\vp_{i+n/2}$ and $\vp_{i-1}\vp_{i-1+n/2}$.
It follows that the blue edges all have the same midpoint and equal length.
Therefore, the points $\vp_1,\dots,\vp_n$ lie on a circle and are symmetric with respect to the centre of this circle.

Conversely, it is easy to see that any set of $n$ points on a circle, symmetric with respect to the centre of the circle, has $n+n/2$ double-normal pairs.

\smallskip
Suppose next that $n$ is odd and that
\[\card{E}=3\lfloor n/2\rfloor +1 = n + \frac{n-1}{2}.\]
We aim for a contradiction, which will finish the proof of Theorem~\ref{thm:2d}.

To be consistent with \eqref{count}, there must be exactly $n$ red edges and $(n-1)/2$ blue edges.
Thus, no red edge is the rightmost edge of both its endpoints, and by Lemma~\ref{6}, the blue edges form a pairwise crossing matching.

By Lemma~\ref{five}\eqref{4b}, there is a natural clockwise ordering $\vp_1,\dots,\vp_n$ of the points, which we choose in such a way that the blue edges are $\vp_i\vp_{i+(n-1)/2}$ ($i=1,\dots,(n-1)/2$), and with $\vp_n$ not incident to any blue edge (Fig.~\ref{fig7}).
\begin{figure}
\centering
\begin{tikzpicture}[scale=0.85, thick]
\foreach \point/\theta in {p1/0,p2/45,p3/90,p4/135,p5/180,p6/225,p7/270,p8/315} {
    \coordinate (\point) at (\theta+22.5:2cm);
}
\coordinate (p9) at (2.2,0);
\draw[dashed,red, ultra thick] (p1)--(p6);
\draw[dashed,red, ultra thick] (p2)--(p5);
\draw[dashed,red, ultra thick] (p3)--(p8);
\draw[dashed,red, ultra thick] (p4)--(p7);
\draw[dashed,red, ultra thick] (p4)--(p1);
\draw[dashed,red, ultra thick] (p6)--(p3);
\draw[dashed,red, ultra thick] (p7)--(p2);
\draw[blue] (p1)--(p5);
\draw[blue] (p2)--(p6);
\draw[blue] (p3)--(p7);
\draw[blue] (p4)--(p8);
\draw[fill] (p1)  node[right] {$\vp_1$} circle (1mm);
\draw[fill] (p2)  node[right] {$\vp_2$} circle (1mm);
\draw[fill] (p3)  node[left] {$\vp_3$} circle (1mm);
\draw[fill] (p4)  node[left] {$\vp_4$} circle (1mm);
\draw[fill] (p5)  node[left] {$\vp_{1+4}$} circle (1mm);
\draw[fill] (p6)  node[left] {$\vp_{2+4}$} circle (1mm);
\draw[fill] (p7)  node[right] {$\vp_{3+4}$} circle (1mm);
\draw[fill] (p8)  node[right] {$\vp_{4+4}$} circle (1mm);
\draw[fill] (p9)  node[right] {$\vp_{9}$} circle (1mm);
\end{tikzpicture}
\caption{Further analysis of the odd case ($n=9$)}\label{fig7}
\end{figure}
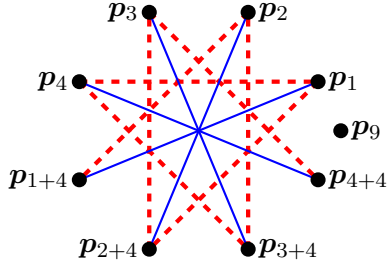

The rightmost edge of $\vp_1$ has to be $\vp_1\vp_{(n-1)/2}$, otherwise it would be disjoint from the blue edge $\vp_{(n-1)/2}\vp_{n-1}$, contradicting Lemma~\ref{7}.
Similarly, for each $i=1,\dots,(n-1)/2$, the rightmost edge of $\vp_i$ is $\vp_i\vp_{i-1+(n-1)/2}$,
and for each $i=(n+3)/2,\dots,n-1$, the rightmost edge of $\vp_i$ is $\vp_i\vp_{i-(n+1)/2}$.

There are two points for which we cannot determine the rightmost edges in this way:
The rightmost edge of $\vp_{(n+1)/2}$ could be either $\vp_{(n+1)/2}\vp_{n-1}$ or $\vp_{(n+1)/2}\vp_{n}$, and the rightmost edge of $\vp_n$ could be either $\vp_n\vp_{(n-1)/2}$ or $\vp_n\vp_{(n+1)/2}$.

For each $i=1,\dots,(n-3)/2$, the red edges $\vp_i\vp_{i+(n+1)/2}$ and $\vp_{i+1}\vp_{i+(n-1)/2}$ are disjoint.
By Lemma~\ref{five}(\ref{3}), they form a rectangle with diagonals the blue edges $\vp_i\vp_{i+(n-1)/2}$ and $\vp_{i+1}\vp_{i+(n+1)/2}$.
Thus, the blue edges all have the same midpoint and equal length.
It follows that $\vp_1\vp_{(n-1)/2}\vp_{(n+1)/2}\vp_{n-1}$ also forms a rectangle.
Since the rightmost edge of $\vp_{(n+1)/2}$ is disjoint from $\vp_1\vp_{(n-1)/2}$, hence is parallel to $\vp_1\vp_{(n-1)/2}$ (again Lemma~\ref{five}(\ref{3})), it must be $\vp_{(n+1)/2}\vp_{n-1}$.
However, it now follows that the rightmost edge at $\vp_n$ can neither be $\vp_n\vp_{(n+1)/2}$, since it would then have to be parallel to $\vp_1\vp_{(n-1)/2}$, nor can it be $\vp_n\vp_{(n-1)/2}$, since it would then have to be parallel to $\vp_{(n+1)/2}\vp_{n-1}$.
This contradiction shows that the inequality in \eqref{count} must be strict, and it follows that $\card{E}\leq3\lfloor n/2\rfloor$ when $n$ is odd.
This completes the proof of Theorem~\ref{thm:2d}.

\section{Gabriel graphs and Delaunay tilings on the sphere}\label{sect:gabriel}
In this section, we introduce strict and weak Gabriel graphs of sets of points on a $2$-sphere.
Strict Gabriel graphs can be considered to be the spherical analogue of
the standard Gabriel graphs \cite{Gabriel-Sokal, Matula-Sokal}.
They will be used to prove Theorem~\ref{thm:0} on strict double-normals.
Weak Gabriel graphs will be used to prove Theorems~\ref{thm:sphere} and \ref{thm:sphereconstruction}.
In Theorem~\ref{lemma:sphere} below, we determine the maximum number of edges of weak Gabriel graphs, using a notion of Delaunay tilings for points on a $2$-sphere.

Denote the unit sphere in $\bR^3$ by $\bS^2$ and its centre by $\vo$.
We call two points $\vx,\vy\in\bS^2$ \define{antipodal} if $\vy=-\vx$.

Let $V$ be a finite subset of $\bS^2$.
In the \emph{weak Gabriel graph} of $V$, two points $\va$ and $\vb$ are joined by an edge if and only if they are not antipodal and if no point of $V$ is contained in the interior of the minor spherical cap with diameter $\va\vb$.
The \emph{strict Gabriel graph} of $V$ is defined similarly, except that we furthermore require that no point of $V$ is on the boundary of the minor spherical cap with diameter $\va\vb$.
Note that we do not joint antipodal pairs in either graph.

We draw the strict and weak Gabriel graph of $V$ on $\bS^2$ by drawing the minor great-circular arc from $\va$ to $\vb$ for each $\va\vb\in E$.
As in the previous section, if there is no danger of confusion, we make no notational or terminological distinction between a strict or weak Gabriel graph and its drawing.

\begin{lemma}\label{crossing}
Two crossing arcs in the drawing of a weak Gabriel graph on $\bS^2$ have the same length and the same midpoint, which is also the point where they cross.

There are no crossings in the drawing of a strict Gabriel graph on $\bS^2$.
\end{lemma}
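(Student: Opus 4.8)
The plan is to translate the weak Gabriel condition into the Euclidean ``Thales'' inequality for spherical caps, to observe that the function encoding it is \emph{affine} on $\bS^2$, and then to note that imposing the condition from \emph{both} edges of a crossing forces all the resulting inequalities to be equalities.

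\textbf{Step 1 (Thales characterisation).} First I would record the following. For non-antipodal $\va,\vb\in\bS^2$, a point $\vy\in\bS^2$ lies in the minor spherical cap with diameter $\va\vb$ if and only if $\langle\vy-\va,\vy-\vb\rangle\leq 0$, and in its interior if and only if this inner product is $<0$. Indeed, writing $2\alpha=d(\va,\vb)<\pi$ and letting $\vm$ be the midpoint of the minor arc $\va\vb$, the cap is $\{\vy\in\bS^2:\langle\vy,\vm\rangle\geq\cos\alpha\}$; since $\vm=(\va+\vb)/(2\cos\alpha)$ with $\cos\alpha>0$ and $|\vy|^2=1$, the inequality $\langle\vy,\vm\rangle\geq\cos\alpha$ rearranges to $\langle\vy-\va,\vy-\vb\rangle\leq 0$. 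Set $f(\vy):=\langle\vy-\va,\vy-\vb\rangle$; on $\bS^2$ this is the affine function $1+\langle\va,\vb\rangle-\langle\vy,\va+\vb\rangle$. Parametrising the minor arc $\va\vb$ by the nonnegative cone of $\va$ and $\vb$, one computes, for $\vx=\lambda\va+\mu\vb\in\bS^2$ with $\lambda,\mu\geq 0$,
\[ f(\vx)=(1+\langle\va,\vb\rangle)(1-\lambda-\mu)\qquad\text{and}\qquad (\lambda+\mu)^2=1+2\lambda\mu(1-\langle\va,\vb\rangle)\text{,} \]
so $\lambda+\mu>1$ and $f(\vx)<0$ precisely when $\vx$ is a relative interior point of the arc. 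Thus relative interior points of the minor arc $\va\vb$ lie in the open cap of $\va\vb$.

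\textbf{Step 2 (reduction to a generic crossing).} Let $\va\vb$ and $\vc\vd$ be two distinct weak Gabriel edges whose drawn arcs share an interior point. If these arcs lay on a common great circle, then (being distinct and overlapping) one of $\va,\vb,\vc,\vd$ would be a relative interior point of the other arc, hence, by Step 1, in its open cap, contradicting the weak Gabriel condition. So the arcs lie on distinct great circles $C_1$ (through $\va,\vb$) and $C_2$ (through $\vc,\vd$); these meet in exactly two antipodal points, so two arcs of length $<\pi$ on them meet in a single point $\vx$, a relative interior point of each. If $\va\vb$ and $\vc\vd$ shared an endpoint, that endpoint would lie in $C_1\cap C_2$, forcing $\vx$ to be its antipode, impossible on an arc of length $<\pi$. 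Hence $\va,\vb,\vc,\vd$ are distinct, $\vx=\lambda\va+\mu\vb=\gamma\vc+\delta\vd$ with $\lambda,\mu,\gamma,\delta>0$, and, by Step 1, $f(\vx)<0$ and $g(\vx)<0$, where $g(\vy):=\langle\vy-\vc,\vy-\vd\rangle$.

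\textbf{Step 3 (forcing equality, and concluding).} By affineness of $f$ on $\bS^2$, $\gamma f(\vc)+\delta f(\vd)=(\gamma+\delta-1)(1+\langle\va,\vb\rangle)+f(\vx)$; substituting $\gamma+\delta-1=-g(\vx)/(1+\langle\vc,\vd\rangle)$ from the Step 1 identity applied to $g$, this becomes
\[ 0\leq\gamma f(\vc)+\delta f(\vd)=f(\vx)-\frac{1+\langle\va,\vb\rangle}{1+\langle\vc,\vd\rangle}\,g(\vx)\text{,} \]
the leftmost inequality being the weak Gabriel condition for $\va\vb$ (so $f(\vc),f(\vd)\geq 0$, as $\vc,\vd\in V$). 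Hence $(1+\langle\vc,\vd\rangle)f(\vx)\geq(1+\langle\va,\vb\rangle)g(\vx)$, and the same computation with the two edges interchanged gives the reverse inequality, so these two quantities are equal. Feeding this back into the displayed identity gives $\gamma f(\vc)+\delta f(\vd)=0$; since $\gamma,\delta>0$ and $f(\vc),f(\vd)\geq 0$, we conclude $f(\vc)=f(\vd)=0$, and symmetrically $g(\va)=g(\vb)=0$. Therefore $\va,\vb,\vc,\vd$ lie on the boundary circle of the minor cap of $\va\vb$ and on the boundary circle of the minor cap of $\vc\vd$. Two distinct circles on $\bS^2$ meet in at most two points, so these boundary circles coincide, hence so do the two minor caps; in particular the two arcs have the same midpoint $\vm$ (the common centre) and the same length ($2\alpha$, twice the common radius). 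Finally $\vm\in C_1\cap C_2$ as well, and since $C_1\cap C_2$ is an antipodal pair while $\vx$ and $\vm$ both lie on an arc of length $<\pi$, we get $\vx=\vm$. This proves the first assertion.

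\textbf{Step 4 (strict Gabriel graphs).} If two distinct strict Gabriel edges had crossing arcs, Steps 2--3 would apply verbatim (every strict Gabriel edge is a weak Gabriel edge) and would yield $f(\vc)=0$; that is, $\vc\in V\setminus\{\va,\vb\}$ lies on the boundary of the minor cap with diameter $\va\vb$, contradicting the definition of the strict Gabriel graph. So its drawing has no crossings. I expect Step 3 to be the crux: each edge individually yields only an inequality, and the point is that estimating from the second edge as well yields the opposite inequality, collapsing everything to equality; the case analysis in Step 2 is routine but is needed to make ``the crossing point'' well defined.
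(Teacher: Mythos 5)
Your argument is correct, and it reaches the conclusion by a genuinely different route from the paper's. The paper works synthetically on the sphere: it takes the midpoints $\vp,\vq$ of the two arcs (the spherical centres of the corresponding caps), converts the Gabriel condition into arc-length inequalities such as $\vp\vd\geq\vp\vb$, turns these into angle inequalities in isosceles spherical triangles, and sums the angles of the spherical quadrilateral $\va\vb\vc\vd$ to get $\myangle\va+\myangle\vb\geq\myangle\vc+\myangle\vd$ from one edge and the reverse from the other; equality throughout then forces the quadrilateral to be inscribed in a circle with both arcs as diameters. You instead linearise: the cap condition becomes the sign of the affine function $f(\vy)=1+\langle\va,\vb\rangle-\langle\vy,\va+\vb\rangle$ on $\bS^2$, the crossing point is a positive combination of each pair of endpoints, and evaluating each edge's function at the other edge's endpoints produces the two opposite inequalities whose conjunction forces $f(\vc)=f(\vd)=g(\va)=g(\vb)=0$, i.e.\ concyclicity with both chords as diameters. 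Both proofs run on the same engine --- each edge contributes one inequality and the crossing supplies both --- but your algebraic version dispenses with spherical trigonometry, makes the equality analysis mechanical, and explicitly disposes of the degenerate configurations (arcs on a common great circle, shared endpoints, well-definedness and uniqueness of the crossing point) that the paper's picture-driven proof passes over tacitly. The price is only that the geometric content (the common circumscribed cap) emerges at the end of the computation rather than guiding it.
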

\begin{proof}
Let $\va\vb$ and $\vc\vd$ be two arcs of the weak Gabriel graph intersecting in $\vs$, say.
Let the midpoint of the arc $\va\vb$ be $\vp$ and the midpoint of $\vc\vd$ be $\vq$.
Without loss of generality, $\vp$ is on the arc $\vs\vb$ and $\vq$ is on the arc $\vs\vd$.
Since $\vd$ is not in the interior of the circle with diameter $\va\vb$, we have the inequality $\vp\vd\geq\vp\vb$ between the spherical lengths of the arcs.
This implies $\myangle\va\vb\vd\geq\myangle\vp\vd\vb$.
Similarly, since $\vp\vd\geq\vp\va$, we have $\myangle\vd\va\vb\geq\myangle\vp\vd\va$,
and since $\vp\vc\geq\vp\va$ and $\vp\vc\geq\vp\vb$, we also obtain $\myangle\vb\va\vc\geq\myangle\vp\vc\va$ and $\myangle\va\vb\vc\geq\myangle\vp\vc\vb$.
It follows that in the spherical quadrilateral $\va\vb\vc\vd$, $\myangle\va+\myangle\vb\geq\myangle\vc+\myangle\vd$.
Using $\vq$ instead of $\vp$, we similarly find that $\myangle\vc+\myangle\vd\geq\myangle\va+\myangle\vb$.
Therefore, all inequalities become equalities.
It follows that $\va\vb\vc\vd$ is inscribed in a circle with centre $\vp=\vq=\vs$ and diameters $\va\vb$ and $\vc\vd$.
This implies the first statement of the lemma, and also that $\va\vb$ and $\vc\vd$ cannot belong the strict Gabriel graph, which gives the second statement.
\end{proof}

We next introduce the Delaunay tiling of a finite set of points on $\bS^2$, which is needed in the description of weak Gabriel graphs with a maximum number of edges.
We first define a \emph{spherical polygon} to be the intersection of finitely many non-opposite closed hemispheres of $\bS^2$, such that the intersection has non-empty interior and does not contain antipodal pairs of points.
The boundary of a spherical polygon consists of $k$ vertices and $k$ minor great-circular arcs, for some $k\geq 3$.
Given a finite subset $V$ of $\bS^2$, form its convex hull $P:=\conv V$ in $\bR^3$.
A point $\vp\in P$ is an \emph{outside point} of $P$ if $P$ is disjoint from the open ray $\setbuilder{\lambda \vp}{\lambda>1}$.
All vertices of $P$ are outside points of $P$, and all outside points of $P$ are boundary points of $P$.
An edge or face of $P$ is called \emph{outside} if all of its points are outside points.
The \emph{Delaunay tiling} of $P$ is defined to consist of the vertices $V$ of $P$ and the central projections of the outside edges and faces of $P$ from $\vo$ to $\bS^2$.
The \emph{edges} of the Delaunay tiling of $P$ are the minor great-circular arcs that are the projections of the outside edges of $P$, and the \emph{faces} of the Delaunay tiling are the projections of the outside faces of $P$.
Thus, the Delaunay tiling is a tiling
\begin{enumerate}[\quad(a)]
\item of the whole $\bS^2$ if $\vo$ is in the interior of $P$,
\item of a hemisphere of $\bS^2$ if $\vo$ is in the relative interior of a face of $P$,
\item of the intersection of two hemispheres of $\bS^2$ if $\vo$ is in the relative interior of an edge of $P$,
\item and finally, of the smallest spherical polygon that contains $V$ if $\vo\notin P$.
\end{enumerate}
\begin{lemma}\label{delaunaycrossing}
No edge of the weak Gabriel graph of $V$ crosses an edge of the Delaunay tiling of $V$.
\end{lemma}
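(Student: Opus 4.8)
The plan is to argue by contradiction, suppose a weak Gabriel edge $\va\vb$ crosses a Delaunay edge $\vc\vd$, and derive a configuration that violates the defining property of one of the two graphs. First I would set up the geometry: the Delaunay edge $\vc\vd$ is the central projection of an outside edge of $P=\conv V$, so it lies on a great circle $\Gamma$; the points $\vc$ and $\vd$ are the endpoints of the corresponding edge of $P$, and no point of $V$ lies strictly on the far side of the supporting plane of that edge (the plane through $\vo$, $\vc$, $\vd$ is \emph{not} what matters — rather, $\vc\vd$ is an edge of $P$ precisely because there is a plane containing the segment $\vc\vd$ with all of $V$ weakly on one side; projecting, all of $V$ lies in one of the two closed hemispheres cut out by $\Gamma$, and $\vc,\vd$ are the two $V$-points on $\Gamma$ bounding the arc). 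So the crossing point $\vs$ of the two arcs lies strictly inside the arc $\vc\vd$, hence strictly inside the great circle $\Gamma$ on the side containing all of $V$.

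Next I would invoke Lemma~\ref{crossing}: since $\va\vb$ and $\vc\vd$ are both edges of the weak Gabriel graph of $V$ and they cross, they have a common midpoint $\vp=\vs$ and equal length, and $\va\vb\vc\vd$ is inscribed in a spherical circle centered at $\vs$. In particular $\va$ and $\vb$ are equidistant from $\vs$, and $\vs$ is the midpoint of arc $\va\vb$. The key point is now to locate $\va$ and $\vb$ relative to $\Gamma$. Since $\vs$ lies strictly on the interior side of $\Gamma$ (the side containing $V$) and the arc $\va\vb$ passes through $\vs$, at least one of $\va,\vb$ must lie on the \emph{exterior} side of $\Gamma$ or on $\Gamma$ itself — otherwise the whole arc $\va\vb$ stays on the interior side, which is fine, so I have to work harder: the right statement is that $\va$ and $\vb$, being endpoints of an arc through $\vs$ whose midpoint is $\vs$, are symmetric through $\vs$, and since all of $V$ lies in the closed hemisphere bounded by $\Gamma$ containing $\vs$, both $\va$ and $\vb$ lie in that closed hemisphere. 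The crucial extra input is that $\vc$ and $\vd$ lie \emph{on} $\Gamma$, i.e.\ on the boundary of that hemisphere. So consider the minor cap with diameter $\va\vb$: its center is $\vs$, which lies strictly inside $\Gamma$. I claim $\vc$ (or $\vd$) then lies in the interior of this cap, contradicting that $\va\vb$ is a weak Gabriel edge. Indeed $\vc$ is an endpoint of the arc $\vc\vd$ which has midpoint $\vs$ and the same spherical radius $r$ from $\vs$ as $\va$ and $\vb$ do; so $\vc$ lies \emph{on} the boundary circle of the cap, not strictly inside — so this naive attempt gives equality, not strict inequality, and I need to push the argument one notch further.

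To get the strict contradiction I would use that $\vc$ and $\vd$ are genuine vertices of $P$ on the great circle $\Gamma$, whereas the inscribed-circle center $\vs$ lies strictly off $\Gamma$; hence the boundary circle of the minor cap with diameter $\va\vb$ (which is the circle through $\va,\vb,\vc,\vd$ centered at $\vs$) meets $\Gamma$ in exactly the two points $\vc,\vd$, and near $\vc$ this circle crosses from the interior side of $\Gamma$ to the exterior side. Since $V$ lies entirely in the closed interior hemisphere, the only $V$-points that the circle can "use up" on $\Gamma$ are $\vc$ and $\vd$; but then the arc $\va\vb$ of this circle, lying strictly on the interior side between $\vc$ and $\vd$ except at its endpoints, has $\va$ and $\vb$ strictly interior to $\Gamma$, and one checks that then $\vc$ itself lies in the \emph{open} minor cap with diameter $\va\vb$ — here one uses that the arc from $\va$ to $\vb$ \emph{not} through $\vc,\vd$ is the minor arc (lengths are equal so "minor" is by the cap convention) and that $\vc$ is separated from the far endpoint appropriately. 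This contradicts $\va\vb$ being a weak Gabriel edge. The main obstacle, as the above indicates, is precisely this last step: turning the "equal radius, hence on the boundary circle" coincidence into a strict containment, which forces a careful case analysis of how the circle through $\va,\vb,\vc,\vd$ sits relative to $\Gamma$ and which of the two arcs between $\vc$ and $\vd$ is the minor one — equivalently, ruling out the degenerate possibility that $\va\vb$ and $\vc\vd$ are the same chord or that $\vc,\vd$ happen to be antipodal on the inscribed circle. I would handle this by noting that $\vc,\vd\in\Gamma$ and $\vs\notin\Gamma$ force $\vc,\vd$ to be non-antipodal on the inscribed circle, so the arc $\vc\vd$ not containing $\va,\vb$ is a proper sub-arc whose interior lies strictly outside $\Gamma$; since all of $V$ is on the closed interior side, this sub-arc's interior contains no point of $V$, and the complementary arc carrying $\va,\vb$ then places at least one of $\va,\vb$ with $\vc$ strictly interior to the cap, completing the contradiction.
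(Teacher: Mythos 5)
There are two genuine gaps here, each fatal on its own. First, your geometric setup of the Delaunay edge is incorrect: you assert that all of $V$ lies in one closed hemisphere bounded by the great circle $\Gamma$ through $\vc$ and $\vd$. But the supporting plane witnessing that the segment $\vc\vd$ is an (outside) edge of $P=\conv V$ does not in general pass through the centre $\vo$, so it does not project to a great circle, and $V$ need not lie on one side of $\Gamma$. (Take the cube: for the edge joining $(1,1,1)/\sqrt{3}$ and $(1,1,-1)/\sqrt{3}$, the great circle $\Gamma$ lies in the plane $x=y$, which separates the remaining vertices.) Second, and more importantly, you apply Lemma~\ref{crossing} to the pair $\va\vb$, $\vc\vd$, but that lemma requires \emph{both} arcs to be edges of the weak Gabriel graph. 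A Delaunay edge need not be a weak Gabriel edge: $\vc\vd$ being an outside edge of $P$ only guarantees that \emph{some} circle through $\vc$ and $\vd$ bounds a cap free of points of $V$, not that the specific minor cap with diameter $\vc\vd$ is empty (on the sphere, just as in the plane, the Gabriel graph is a subgraph of the Delaunay graph, not conversely). Without this, the whole ``common midpoint $\vs$, circle through $\va,\vb,\vc,\vd$'' structure on which the rest of your argument rests --- including the delicate strictness analysis you flag as the main obstacle --- is unavailable.

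The statement has a much more direct proof that goes in the opposite direction: for a weak Gabriel edge $\va\vb$, no point of $V$ lies in the interior of the minor cap with diameter $\va\vb$, so the plane through the bounding circle of that cap supports $P$. Hence the chord $\va\vb$ lies in $P$ intersected with a supporting plane, i.e.\ in a single outside face (or edge) of $P$, and the arc $\va\vb$ is therefore contained in the closure of a single face of the Delaunay tiling; an arc contained in one closed tile cannot cross a tile boundary. No contradiction argument, and no crossing lemma, is needed.
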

\begin{proof}
Consider an edge $\va\vb$ of the weak Gabriel graph $G$ of $V$.
Note that the plane that passes through the boundary of the minor spherical cap with diameter $\va\vb$, supports $P$.
It follows that each edge of $G$ is contained in some face of the Delaunay tiling $D$ of $V$.
\end{proof}
The main result of this section is the following upper bound for the number of edges of a weak Gabriel graph, together with a characterization of equality.
\begin{theorem}\label{lemma:sphere}
The weak Gabriel graph $G$ of a finite set $V$ of at least $2$~points on $\bS^2$ has at most $\frac{15}{4}\card{V}-6$ edges.
If equality occurs, then the interior of the convex hull of $V$ contains the origin $\vo$, and each face of the Delaunay tiling of $V$ is either an acute spherical triangle or an equiangular spherical quadrilateral, each vertex is incident to exactly $3$ spherical quadrilaterals, and the edges of $G$ are the edges of the Delaunay tiling together with the diagonals of the spherical quadrilaterals.

Conversely, if a finite subset $V$ of $\bS^2$ is given such that $\vo$ is in the interior of its convex hull, and such that the faces of its convex hull are rectangles and triangles, with $3$ rectangles at each vertex, then the weak Gabriel graph of $V$ has exactly $\frac{15}{4}\card{V}-6$ vertices.
\end{theorem}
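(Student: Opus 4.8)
The plan is to combine Euler's formula for the Delaunay tiling $D$ of $V$ with a bound, face by face, on the number of edges of $G$ that lie in a given face of $D$; the key ingredients are Lemma~\ref{delaunaycrossing}, Lemma~\ref{crossing}, and the observation that the vertices of any face of $D$ lie on a common circle of $\bS^2$. First I would dispose of the configurations in which $D$ does not tile all of $\bS^2$ — that is, $\vo\notin\operatorname{int}\conv V$, cases (b)--(d): there $D$ tiles a topological disk bounded by a spherical polygon, and a variant of the argument below with the boundary version of Euler's formula shows $\card E<\frac{15}{4}\card V-6$ in these cases (the inequality being trivial for small $\card V$), so in particular equality forces $\vo\in\operatorname{int}\conv V$. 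Assume henceforth $\vo\in\operatorname{int}\conv V$, so $D$ tiles $\bS^2$; since $\bS^2$ is strictly convex, every point of $V$ is a vertex of $\conv V$, hence of $D$, so $D$ has exactly $n:=\card V$ vertices, $e_D$ edges and $f_D$ faces with $n-e_D+f_D=2$. By Lemma~\ref{delaunaycrossing} every edge of $G$ lies in the closure of a single face of $D$, and, its endpoints being vertices of that face, it is a side or a diagonal of it.

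The heart of the proof is the analysis inside a single face $F$ of $D$, a spherical $k$-gon. Its vertices lie in the plane of the corresponding face of $\conv V$, hence on a common circle $C_F$; moreover, by the empty-circumcircle property of the Delaunay tiling, the cap bounded by $C_F$ contains no point of $V$ in its interior. Since an interior point of $C_F$ is the midpoint of a unique chord of $C_F$ unless it is the centre, Lemma~\ref{crossing} forces every two crossing edges of $G$ inside $F$ to be diameters of $C_F$ meeting at its centre. Using this, together with the spherical Thales property, I would establish: (i) a triangular face contributes no diagonal of $G$, and contributes all three sides only if it is non-obtuse; (ii) a quadrilateral face contributes at most its two diagonals, and both only if it is a spherical rectangle, the diagonals then being diameters of $C_F$; (iii) a face with $k\ge 5$ sides contributes at most $k-3$ diagonals of $G$, because the diagonals of $G$ through the centre of $C_F$ are diameters of $C_F$, the remaining ones are confined to the sectors these cut out and are pairwise non-crossing, and a counting argument on the resulting plane graph inside $F$ gives the bound. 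I expect (iii) — extracting the exact constant $k-3$ in the presence of several concurrent diameters of $C_F$ — to be the main obstacle.

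With these in hand, write $\card E=A+\sum_F b_F$, where $A$ is the number of edges of $G$ that are edges of $D$ and $b_F$ is the number of diagonals of $G$ in $F$; then $A\le e_D=n-2+f_D$ by Euler's formula. Let $q_2$ be the number of quadrilateral faces of $D$ that are rectangles contributing both diagonals. Because the face angles of $D$ around any point of $\bS^2$ sum to $2\pi$ and a spherical rectangle has each angle greater than $\pi/2$, at most three such rectangles meet at a vertex, so $4q_2\le 3n$. Substituting the bounds (i)--(iii) and the identity $\sum_F(k_F-2)=2n-4$, the contributions of faces with at least $5$ sides cancel and the remaining arithmetic collapses to $\card E\le 3n-6+q_2\le\frac{15}{4}n-6$. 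Equality requires $A=e_D$, hence every edge of $D$ is an edge of $G$ (which forces every triangular face to be acute), and $q_2=\tfrac34 n$, hence exactly three rectangles at every vertex; then at each vertex the angle remaining after those three rectangles is less than $\pi/2$, so every other face at that vertex has all angles less than $\pi/2$, which is impossible for a spherical polygon with more than three sides — so the only other faces are acute triangles and every quadrilateral face is a rectangle contributing both diagonals, which is the stated characterisation. The converse is the same computation run backwards: for such a $V$ the three-rectangles-per-vertex condition forces $q:=3n/4$ rectangular faces, $t=n/2-4$ triangular faces (all acute, by the angle budget), and $e_D=\tfrac94 n-6$; then the empty-circumcircle property shows that every edge of $D$ and both diagonals of every rectangle lie in $G$, whence $\card E=e_D+2q=\tfrac{15}{4}n-6$.
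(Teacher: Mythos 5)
Your architecture is viable and genuinely different from the paper's. The paper proves the inequality without ever invoking the Delaunay tiling: it groups the pairwise-crossing Gabriel arcs into inscribed ``crossing polygons'', replaces each bundle of concurrent diagonals by the boundary of its crossing polygon to obtain a plane graph $G'$ on $\bS^2$, applies Euler's formula to $G'$, and pays back at most $i/2$ removed diagonals per $i$-gonal crossing polygon; the term $\frac34\card{V}$ comes from the same obtuse-angle argument as your $4q_2\le 3n$, and the Delaunay tiling appears only in the equality analysis. Your global arithmetic (the identity $\sum_F(k_F-2)=2n-4$, the chain $\card{E}\le e_D+\sum_F(k_F-3)+q_2=3n-6+q_2\le\frac{15}{4}n-6$, and the converse computation) is correct. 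However, the proposal has two genuine gaps. The first is exactly where you expect it, in (iii), and the argument you sketch there does not close: with exactly two concurrent diameters in a $k$-gonal face $F$, the ``non-crossing diagonals confined to sectors'' count allows $k-2$ diagonals, not $k-3$ (the two diameters, the four chords joining consecutive diameter endpoints around $C_F$, and a fan triangulating each sector are pairwise non-crossing and none crosses a diameter), so combinatorics alone cannot deliver the constant. The missing ingredient is metric and in fact makes (i)--(iii) nearly immediate: the cap with diameter $\va\vb$ equals $\bS^2\cap\setbuilder{\vw}{\langle\vw-\va,\vw-\vb\rangle\le 0}$, so a vertex $\vw$ lies strictly inside it iff the Euclidean angle $\angle\va\vw\vb$ is obtuse, and by the inscribed-angle theorem in the plane of $C_F$ every vertex of $F$ on the open \emph{minor} arc of $C_F$ between $\va$ and $\vb$ has this property. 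Hence a diagonal of $F$ that is not a diameter of $C_F$ is never an edge of the weak Gabriel graph, while a diagonal that is a diameter of $C_F$ automatically is one (its diametral cap is the empty cap bounded by $C_F$). This gives $b_F\le\lfloor k_F/2\rfloor$, which is $\le k_F-3$ for $k_F\ge5$ and $\le 2$ for $k_F=4$ with equality exactly for rectangles, and it also supplies the converse direction.

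The second gap is the disposal of the configurations with $\vo$ not in the interior of $\conv V$. Everything in your count --- the circumscribed circle $C_F$ of each face, the angle sum of $2\pi$ at each vertex that underlies $4q_2\le 3n$, Euler's formula with characteristic $2$ --- presupposes that the Delaunay tiling covers the whole sphere; in cases (b)--(d) the ``faces'' can be hemispheres or lunes without a circumscribed circle in the required sense, boundary vertices have angle sum below $2\pi$, and the disk version of Euler's formula shifts the constants, so asserting that ``a variant of the argument'' yields a strict inequality is not yet a proof. The paper sidesteps this entirely by running Euler's formula on the modified graph $G'$ embedded in $\bS^2$, which is valid wherever $\vo$ lies, and deduces $\vo\in\operatorname{int}\conv V$ only afterwards, from the fact that in the equality case no face of $G'$ can contain an open hemisphere. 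You would need either to carry out the boundary analysis honestly in those cases or to restructure that part along the paper's lines. (A minor further point: ``every edge of $D$ is an edge of $G$'' only forces triangles to be non-obtuse, since a right triangle's hypotenuse is still a weak Gabriel edge; acuteness must come, as you do note later, from the three-rectangles-per-vertex angle budget.)
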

\begin{proof} 
Define a relation $\sim$ on the set $E$ of edges of $G$ by setting $\ve_1\sim\ve_2$ if $\ve_1=\ve_2$ or $\ve_1$ crosses $\ve_2$.
By Lemma~\ref{crossing}, $\sim$ is an equivalence relation on $E$, where each equivalence class is composed of edges drawn as congruent arcs with a common midpoint.
Note that although crossings of arcs may occur, by the definition of a weak Gabriel graph, no point in $V$ can be in the relative interior of an arc.

Without loss of generality, $\card{E}\geq 2$.
Consider an equivalence class of at least two pairwise crossing arcs.
There is a unique spherical polygon such that its vertices are exactly the endpoints of the crossing arcs, with each arc a diagonal.
We call this spherical polygon a \emph{crossing polygon}.
\begin{claim}\label{star2}
If two crossing polygons intersect, then they intersect in either a single vertex or in a common edge.
\end{claim}
\begin{proof}
Denote the two intersecting crossing polygons by $P_1$ and $P_2$.
Let $C_i$ be the circumcircle of $P_i$ ($i=1,2$).
The claim is obvious if $C_1$ and $C_2$ touch in a single point.
Thus we may assume that $C_1$ and $C_2$ intersect in two points $\vp$ and $\vq$, say.
By the definition of the weak Gabriel graph $G$, no point of $V$ is in the interior of either $C_1$ or $C_2$.
Thus, the vertices of $P_i$ all lie on the major arc of $C_i$ from $\vp$ to $\vq$ ($i=1,2$).
If neither $\vp$ nor $\vq$ is a common vertex of $P_1$ and $P_2$, then $P_1$ and $P_2$ are disjoint, a contradiction.
Therefore, $P_1$ and $P_2$ either have one vertex ($\vp$ or $\vq$) in common and no other point, or have both vertices $\vp$ and $\vq$ in common, and then they have an edge in common.
\end{proof}
We now modify the weak Gabriel graph $G$ to form a new graph $G'=(V,E')$ on the same vertex set, drawn on $\bS^2$ as follows.
For each equivalence class of at least two pairwise crossing arcs, remove the crossing arcs, and add the edges of the associated crossing polygon if they are not already in $G$ (Figure~\ref{conversion}).
\begin{figure}
\centering
\begin{tikzpicture}[thick, scale=0.6]
\draw[blue] (0,0) circle (2cm);
\foreach \theta in {0, 50, 90, 150} {
    \draw[fill] (\theta:2cm) circle (1mm);
    \draw[fill] (\theta+180:2cm) circle (1mm);
}
\foreach \theta in {0, 50, 90, 150} {%
\draw (\theta:2cm) -- (\theta+180:2cm);}%
\draw[dashed] (0:2cm) \foreach \theta in {50, 90, 150} {%
-- (\theta:2cm)}%
\foreach \theta in {0, 50, 90, 150} {
-- (\theta+180:2cm)}
-- cycle;
\end{tikzpicture}
\qquad\raisebox{1.3cm}{\scalebox{2}{$\rightsquigarrow$}}
\qquad
\begin{tikzpicture}[thick, scale=0.6]
\draw[blue] (0,0) circle (2cm);
\foreach \theta in {0, 50, 90, 150} {
    \draw[fill] (\theta:2cm) circle (1mm);
    \draw[fill] (\theta+180:2cm) circle (1mm);
}
\foreach \theta in {0, 50, 90, 150} {%
\draw[dash pattern=on 3.3pt off 3pt] (\theta:2cm) -- (\theta+180:2cm);}%
\draw (0:2cm) \foreach \theta in {50, 90, 150} {%
-- (\theta:2cm)}%
\foreach \theta in {0, 50, 90, 150} {
-- (\theta+180:2cm)}
-- cycle;
\end{tikzpicture}
\caption{Creating $G'$ from the weak Gabriel graph $G$}\label{conversion}
\end{figure}
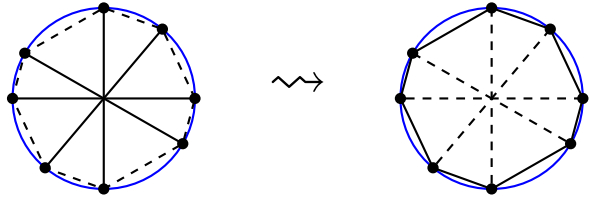
By Lemma~\ref{star2}, no edge of a crossing polygon can also be an edge of $G$ that crosses some other edge of $G$, and therefore, $G'$ is unambiguously defined.
Also, since $\card{E}\geq 2$, it follows that $\card{E'}\geq 2$ (either no new edges are added, or there is a crossing polygon with at least $4$ edges and then $\card{E'}\geq 4$).
\begin{claim}\label{11}
No edge of $G'$ contains a vertex in its relative interior.
\end{claim}
\begin{proof}
As mentioned before, no edge of $G$ contains a vertex in its relative interior.
Moreover, if a newly added edge $e'$ passed through some $\vp\in V$, then the spherical cap circumscribing the crossing polygon to which $e'$ belongs would contain $\vp$, which would contradict the defining property of the weak Gabriel graph $G$.
\end{proof}
\begin{claim}\label{12}
$G'$ is drawn without crossings.
\end{claim}
\begin{proof}
By construction we have eliminated crossings between edges of $G$.
Suppose that a newly added edge $e'\in E'\setminus E$ crosses an edge $e\in E\cap E'$ of $G'$ that was already in $G$.
Since no vertex lies inside the crossing polygon of which $e'$ is an edge, $e$ has to cross the whole crossing polygon, and in particular, one of its diagonals $f$, say (Figure~\ref{fig:crossing}).
\begin{figure}
\centering
\begin{tikzpicture}[thick, scale=0.7]
\draw[blue] (0,0) circle (2cm);
\foreach \theta in {0, 60} {
    \draw[fill] (\theta:2cm) circle (1mm);
    \draw[fill] (\theta+180:2cm) circle (1mm);
}
\foreach \theta in {0, 60} {%
\draw[dashed] (\theta:2cm) -- (\theta+180:2cm);}%
\draw (0:2cm) -- node [pos=0.55, left] {$e'$} (60:2cm);
\draw (0+180:2cm) -- (60+180:2cm);
\draw (10:0.7cm)  -- (20:2.8cm) node[pos=0.75, above] {$e$} node[very near start, below=1mm] {$f$};
\draw[fill] (20:2.8cm) circle (1mm);
\end{tikzpicture}
\caption{$G'$ has no crossings}\label{fig:crossing}
\end{figure}
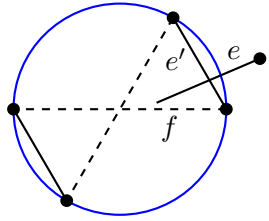
By Lemma~\ref{crossing}, $e$ and $f$ cross at their common centre and they have the same length.
It follows that $e$ is also a diagonal of the crossing polygon, which contradicts that $e\in E'$.
Finally, two newly added edges $e',f'\in E'\setminus E$ cannot cross by Lemma~\ref{star2}.
\end{proof}
Lemmas~\ref{11} and \ref{12} together show that $G'$ is embedded in $\bS^2$.
As usual, we define the \emph{faces} of $G'$ to be the connected components of the complement in $\bS^2$ of the drawing of $G'$.
We define the number of edges \emph{bounding} a face $F$ as the number of arcs belonging to the boundary of $F$, with the convention that an arc is counted twice if $F$ is on both sides of it.
Let $f_i$ denote the number of faces of $G'$ bounded by $i$ edges.
Since $\card{E'}\geq2$, $f_0=f_1=f_2=0$.
(Note that antipodal pairs are not joined in $G'$.)
Then the following well-known inequality holds:
\begin{equation}\label{three'}
\card{E'}\leq 3\card{V}-6-f_4-2f_5-3f_6-\cdots.
\end{equation}
Indeed, counting incident vertex--edge pairs in two ways gives
\begin{equation}\label{one}
2\card{E'}= 3f_3+4f_4+5f_5+\cdots,
\end{equation}
and if we denote the number of connected components of $G'$ by $c'$, then
by Euler's formula,
\begin{equation}\label{three}
\card{V}-\card{E'}+f_3+f_4+\cdots = 1+c'\geq 2.
\end{equation}
Now add $3\times$\eqref{three} to~\eqref{one} to obtain \eqref{three'}.

Let $g_i$ denote the number of crossing polygons with $i$ edges.
Then $g_i=0$ unless $i$ is even and $i\geq 4$.
Also,
\begin{equation}\label{gf}
g_i\leq f_i\quad\text{for all $i$.}
\end{equation}
Each angle of a crossing polygon is obtuse.
Therefore, each vertex is incident to at most three crossing polygons.
Counting incident vertex--crossing polygon pairs in two ways, we obtain:
\begin{equation*}\label{twoa}
4g_4+6g_6+\dots\leq 3\card{V},
\end{equation*}
hence
\begin{equation}\label{twob}
g_4\leq \frac{3}{4}\card{V}.
\end{equation}
For each crossing polygon with $i$ edges, at most $i/2$ edges were removed from~$G$.
Therefore, the number of original edges in $G$ is at most
\begin{align}
\card{E} &\leq \card{E'}+2g_4+3g_6+\cdots\notag\\
&\overset{\eqref{three'}}{\leq} 3\card{V}-6 -f_4-2f_5-3f_6-\cdots + 2g_4 +3g_6 +4g_8 + \cdots\notag\\
&\overset{\eqref{gf}}{\leq} 3\card{V}-6 + g_4\notag\\
&\overset{\eqref{twob}}{\leq} 3\card{V}-6 + \frac{3}{4}\card{V},\notag
\end{align}
which proves the first part of the theorem.
Equality implies that $g_6=g_8=g_{10}=\dots=0$, $f_5=f_6=f_7=\dots=0$, $f_4=g_4=3\card{V}/4$ and $c'=1$.
That is, the only crossing polygons in $G'$ are spherical quadrilaterals, each quadrilateral face of $G'$ is a crossing polygon and is therefore equiangular, the edges of the crossing polygons were already in $G$, the only faces of $G'$ are spherical triangles and spherical quadrilaterals, each vertex is incident to exactly three spherical quadrilaterals, and $G'$ is connected.

It follows that the angles of the spherical triangles are all acute, and in particular, the spherical triangles cannot contain an open hemisphere.
It also follows that the angles of the spherical quadrilaterals must all be less than $\pi$, which means that no spherical quadrilateral contains an open hemisphere.
Therefore, $\vo$ is in the interior of $P:=\conv V$, and the central projections of the faces and edges of $P$ from $\vo$ onto $\bS^2$ form the Delaunay tiling of $V$.
Consider a spherical quadrilateral face $\va\vb\vc\vd$ of $G'$.
Since the edges $\va\vb, \vb\vc, \vc\vd, \vd\va\in E$, the circles with these edges as diameters do not contain any vertex in their interiors.
In particular, the circumcircle of $\va\vb\vc\vd$ does not pass through any point of $V$ other than $\va$, $\vb$, $\vc$, $\vd$.
Since $\va$, $\vb$, $\vc$, $\vd$ lie in a plane, it follows that $\conv\set{\va,\vb,\vc,\vd}$ is a rectangular face of $P$.
Therefore, $\va\vb\vc\vd$ is a face of the Delaunay tiling.

Similarly, given a triangular face $\va\vb\vc$ of $G'$, the circles with diameters $\va\vb$, $\vb\vc$, $\vc\va$ contain the circumcircle of $\va\vb\vc$, and it follows that $\conv\set{\va,\vb,\vc}$ is a triangular face of $P$.

It follows that $G'$ is the graph of the Delaunay tiling of $V$.
If we add the diagonals of each quadrilateral face, we obtain the original weak Gabriel graph $G$.

\medskip
Suppose next $V\subset\bS^2$ is given such that $\vo$ is in the interior of $P:=\conv V$, and such that the faces of $P$ are rectangles and triangles, with each vertex belonging to three rectangles.
Then the triangles are necessarily acute.
Also, the faces of the Delaunay tiling are equiangular spherical quadrilaterals and acute spherical triangles.
We have to show that the graph of the Delaunay tiling and the diagonals of the equiangular quadrilaterals together form the weak Gabriel graph of $V$.

By Lemma~\ref{delaunaycrossing}, for each edge $\va\vb$ of the weak Gabriel graph $G$, the segment $\va\vb$ is on the boundary of $P$, hence is either an edge of $P$ or a diagonal of one of the rectangular faces.
It follows that the arc $\va\vb$ is either an edge of the Delaunay triangulation $D$ or a diagonal of a quadrilateral face of $D$.

Conversely, we have to show that the edges of $D$ and the diagonals of the quadrilateral faces of $D$ are also edges of $G$.
Consider first the diagonal $\va\vc$ of a quadrilateral face $\va\vb\vc\vd$ of $D$.
The circle $C_1$ with diameter $\va\vc$ circumscribes $\va\vb\vc\vd$.
Since the plane through $C_1$ supports $P$ in the rectangle $\va\vb\vc\vd$, it follows that no other vertex of $V$ lie inside or on $C_1$, hence $\va\vc$ is an edge of $G$.

Next, consider an edge $\va\vb$ of $D$.
We have to show that no other point of $V$ lies inside the circle $C_2$ with diameter $\va\vb$.
Let $F$ be one of the two faces of $D$ bounded by $\va\vb$.
Since the vertices of $F$ are not in the interior of $C_2$, the circumcircle of $F$ (which contains no points of $V$ other than the vertices of $F$), contains the one semicircle of $C_2$ bounded by $\va\vb$.
Similarly, the circumcircle of the other face bounded by $\va\vb$ contains the other semicircle of $C_2$.
It follows that $C_2$ does not have any point of $V$ in its interior.

We have shown that the edges of the weak Gabriel graph of $V$ are exactly the edges of the Delaunay triangulation together with the diagonals of the quadrilateral faces.
Similar to the calculation above, it now easily follows that the weak Gabriel graph of $V$ has exactly $\frac{15}{4}\card{V}-6$ edges.
\end{proof}

\section{Proof of Theorem~\ref{thm:0}}\label{sect:3}
We next use strict Gabriel graphs to prove Theorem~\ref{thm:0}.
The statement is trivial when $\card{V}=4$, so we assume that $\card{V}>4$ and that the theorem holds for sets of smaller size.

Suppose that $V$ contains two antipodal points $\vx$ and $\vy$ (that is, $\vy=-\vx$).
Then $\vx\vy$ is a strict double-normal pair.
We claim that $\vx$ and $\vy$ have no other neighbours in the strict double-normal graph of $V$.
Indeed, if $\vx\vz$ is another double-normal pair, say, then the plane through $\vz$ perpendicular to $\vx\vz$ contains $\vy$, so $\vx\vz$ is not a strict double-normal pair.
It follows that \[N(V)=N(V\setminus\set{\vx,\vy}) + 1 \leq 2(\card{V}-2)-2+1 < 2\card{V}-2.\]

Therefore, we may assume without loss of generality that $V$ does not contain antipodal pairs of points.
For any $\vx\in\bS^2$, write $\vx'$ for the antipodal point $-\vx$ of $\vx$ on $\bS^2$, and let $V':=\setbuilder{\vv'}{\vv\in V}$.
By assumption, $V\cap V'=\emptyset$.
Define a graph $G$ on $V\cup V'$ with edge set
\[ E:= \setbuilder{\vx\vy'}{\vx,\vy\in V, \vx\vy\text{ is a strict double-normal pair in }V}.\]
Draw the edges of $G$ as minor great-circular arcs of $\bS^2$.
\begin{claim}\label{claim0}
$G$ is contained in the strict Gabriel graph of $V\cup V'$.
\end{claim}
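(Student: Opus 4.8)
The plan is to unpack the definition of the strict Gabriel graph of $V\cup V'$ and verify the two required conditions for each edge $\vx\vy'\in E$, namely that $\vx$ and $\vy'$ are not antipodal, and that no point of $V\cup V'$ lies in (or on the boundary of) the minor spherical cap with diameter $\vx\vy'$. The first condition is immediate: $\vx\vy'$ antipodal would mean $\vy'=-\vx=\vx'$, hence $\vy=\vx$, which is excluded since a double-normal pair consists of two distinct points.

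For the second condition I would translate the cap condition back to a statement in $\bR^3$. The minor spherical cap with diameter $\vx\vy'$ is cut off by the plane $\Pi$ through $\vx$ and $\vy'$ whose normal direction is $\vx-\vy'$; a point $\vz\in\bS^2$ lies in the closed minor cap precisely when it lies on the far side of $\Pi$ from the short arc, equivalently when $\langle \vz-\vx,\,\vx-\vy'\rangle\geq 0$ and $\langle \vz-\vy',\,\vy'-\vx\rangle\geq 0$, i.e.\ when $\vz$ lies in the closed slab-like region determined by the two parallel planes through $\vx$ and through $\vy'$ perpendicular to $\vx-\vy'$. Now observe that $\vx-\vy' = \vx+\vy$, so this direction is \emph{parallel} to the direction $\vx+\vy$, whereas the double-normal condition for the pair $\vx\vy$ in $V$ concerns the direction $\vx-\vy$; these are in general not the same. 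So the key geometric step is to relate the cap for $\vx\vy'$ to the strip for $\vx\vy$. Here I would use the fact that $\vx,\vy\in\bS^2$: since $\abs{\vx}=\abs{\vy}=1$, the vector $\vx+\vy$ bisects the angle between $\vx$ and $\vy$ and is \emph{perpendicular} to $\vx-\vy$. Consequently the plane through the midpoint of $\vx$ and $\vy'$ perpendicular to $\vx-\vy'=\vx+\vy$ passes through $\vo$, and the minor cap with diameter $\vx\vy'$ is exactly the intersection of $\bS^2$ with one of the two closed halfspaces bounded by this plane through $\vo$ — that is, a closed hemisphere. Explicitly, the minor cap with diameter $\vx\vy'$ is the hemisphere $\setbuilder{\vz\in\bS^2}{\langle \vz,\vx+\vy\rangle\geq 0}$ truncated appropriately; one checks the boundary circle passes through $\vx$ and $\vy'$ and the relevant halfspace is $\langle\vz,\vx+\vy\rangle\ge\langle\vx,\vx+\vy\rangle=\langle\vy',\vx+\vy\rangle$… — and since $\langle\vx,\vx+\vy\rangle=1+\langle\vx,\vy\rangle=\langle\vy',\vx+\vy\rangle$ after sign care, this is $\langle\vz,\vx+\vy\rangle\ge 1+\langle\vx,\vy\rangle$ for one cap and $\le$ for the other; the \emph{minor} one is the one not containing $-\vx=\vx'$ and $-\vy$.

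With this identification in hand, I would finish as follows. A point $\vv\in V$ (resp.\ $\vv'\in V'$) lies in the closed minor cap with diameter $\vx\vy'$ iff $\vv$ (resp.\ $-\vv$) satisfies the corresponding linear inequality, which by the computation above translates to: the plane through $\vx$ perpendicular to $\vx\vy$ separates $\vv$ from $\vy$, or $\vv$ lies on that plane — contradicting that $\vx\vy$ is a \emph{strict} double-normal pair in $V$ (where $V$, and hence $\pm V$ by symmetry of the strip, lies in the open strip save for $\vx,\vy$ themselves). One must handle the boundary cases separately: $\vv=\vx$ gives $\vz=\vx$, an endpoint, which is allowed; and $\vv'=\vy'$ gives the other endpoint, allowed; every other point of $V\cup V'$ that landed on the boundary circle would force the corresponding point of $V$ onto one of the two bounding planes of the strip, violating strictness. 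I expect the main obstacle to be exactly this bookkeeping — carefully matching up which of the two caps is the \emph{minor} one and which bounding plane of the strip each potential violating point would have to lie on — rather than any deep idea; once the hemisphere description of the cap is established, everything reduces to the definition of a strict double-normal pair together with the central symmetry of the strip about $\vo$.
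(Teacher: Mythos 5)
Your overall plan is the right one, and your final paragraph states the correct reduction (a point of $V\cup V'$ in the closed minor cap with diameter $\vx\vy'$ forces a point of $V$ onto or beyond a bounding plane of the strip). But the computation you offer to justify that reduction is wrong at its central step: you take the plane bounding the cap to be perpendicular to the \emph{chord} direction $\vx-\vy'=\vx+\vy$. The circle with diameter $\vx\vy'$ contains that chord, so its plane is perpendicular not to $\vx-\vy'$ but to $\vx+\vy'=\vx-\vy$, with the circle's centre at the chord's midpoint $(\vx+\vy')/2$. Several of your intermediate assertions are consequently false: the pair of inequalities $\langle\vz-\vx,\vx-\vy'\rangle\geq 0$ and $\langle\vz-\vy',\vy'-\vx\rangle\geq 0$ sum to $-\lvert\vx-\vy'\rvert^2\geq 0$ and so define the empty set; the minor cap is not a hemisphere (its bounding circle is a great circle only when $\vx$ and $\vy'$ are antipodal); and $\langle\vx,\vx+\vy\rangle=1+\langle\vx,\vy\rangle$ while $\langle\vy',\vx+\vy\rangle=-(1+\langle\vx,\vy\rangle)$, so the equality you need "after sign care" does not hold. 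Since the final paragraph leans on "the computation above", the argument as written has a genuine gap.

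The fix is exactly the observation that makes the paper's proof a one-liner: the plane containing the circle with diameter $\vx\vy'$ is $\setbuilder{\vz}{\langle\vz,\vx-\vy\rangle=1-\langle\vx,\vy\rangle}$, which is precisely the plane $H_{\vx}$ through $\vx$ perpendicular to the chord $\vx\vy$ (check: $\langle\vy',\vx-\vy\rangle=1-\langle\vx,\vy\rangle$, and the foot of the perpendicular from $\vo$ is $(\vx-\vy)/2$, the midpoint of $\vx\vy'$). The minor cap is then the part of $\bS^2$ on the far side of $H_{\vx}$ from $\vo$, and the strict double-normal condition for $\vx\vy$ (together with the central symmetry of the strip, which handles the points of $V'$) immediately rules out any point of $(V\cup V')\setminus\set{\vx,\vy'}$ lying in or on that cap. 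With the normal corrected, no "relating the cap direction to the strip direction" is needed at all.
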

\begin{proof}
For any strict double-normal pair $\vx\vy$ of $V$, since $\vx$ and $\vy$ are not antipodal, the planes through $\vx$ and $\vy$ perpendicular to the chord $\vx\vy$, intersect $\bS^2$ in the circles with diameters $\vx\vy'$ and $\vx'\vy$.
Because $\vx\vy$ is a strict double-normal pair of $V$, no point of $V$ or $V'$ lies on or in the interior of the circular caps cut off by these planes.
It follows that $\vx\vy'$ and $\vx'\vy$ are edges of the strict Gabriel graph of~$V\cup V'$.
\end{proof}
By Lemmas~\ref{crossing} and \ref{claim0}, $G$ is planar.
By construction, $G$ is bipartite with classes $V$ and $V'$.
By a well-known consequence of Euler's formula, we obtain $\card{E}\leq 2(2\card{V})-4$.
Since the graph $G$ has two edges $\vx\vy'$ and $\vx'\vy$ for each strict double-normal pair of $V$,
we obtain $2N(V)=\card{E}\leq 4\card{V}-4$,
and the first part of the theorem follows.

As mentioned before, for each $n\geq 4$, except $n=5$, there exists a set of $n$ points on the $2$-sphere with $2n-2$ diameters \cite[Lemma 7(e)]{sw-lenz}.
This shows that the inequality is sharp, except possibly for $n=5$.
However, it is not difficult to find $5$ points on the sphere with $8$ strict double-normal pairs.
Indeed, let $\vp_1,\vp_2,\vp_3$ be three equidistant points on some great circle $C_1$ of $\bS^2$.
Let $C_2$ be the great circle that passes through $\vp_3$ perpendicular to $C_1$.
Let $\vp_4$ and $\vp_5$ be points on $C_2$ close to $\vp_3$, with $\vp_3$ between $\vp_4$ and $\vp_5$.
Then $\set{\vp_1,\dots,\vp_5}$ has $8$ strict double-normal pairs (all pairs except $\vp_3\vp_4$ and $\vp_3\vp_5$).
This finishes the proof of Theorem~\ref{thm:0}.

\section{Proof of Theorem~\ref{thm:sphere}}\label{sect:4}
As in the proof of Theorem~\ref{thm:0}, write $\vx'$ for the antipodal point $-\vx$ of $\vx$, and let $V':=\setbuilder{\vv'}{\vv\in V}$.
Define a graph $G_1$ on $V\cup V'$ with edge set
\[ E_1:= \setbuilder{\vx\vy'}{\vx,\vy\in V, \vx\neq \vy', \vx\vy\text{ is a double-normal pair in }V}.\]
Draw the edges of $G_1$ as minor great-circular arcs of $\bS^2$.
Let $G_2=(V\cap V', E_2)$ be the induced subgraph of $G_1$ on $V\cap V'$.
\begin{claim}\label{15}
$G_1$ is contained in the weak Gabriel graph of $V\cup V'$, and $G_2$ is contained in the weak Gabriel graph of $V\cap V'$.
\end{claim}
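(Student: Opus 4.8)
The plan is to verify the two containments separately, in each case by taking an edge of the graph in question and exhibiting the relevant empty spherical cap, exactly as in the proof of Claim~\ref{claim0}, but without the strictness hypothesis.

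First I would handle $G_1\subseteq$ (weak Gabriel graph of $V\cup V'$). Let $\vx\vy'\in E_1$, so $\vx,\vy\in V$, $\vx\neq\vy'$ (equivalently $\vx$ and $\vy$ are not antipodal), and $\vx\vy$ is a double-normal pair of $V$. Since $\vx,\vy$ are not antipodal points of $\bS^2$, the chord $\vx\vy$ is not a diameter of the sphere, and the plane through $\vy$ perpendicular to $\vx\vy$ meets $\bS^2$ in a circle; this circle has diameter $\vx\vy'$, since $\vx\vy'$ is precisely the chord of $\bS^2$ lying in that perpendicular plane and passing through $\vy$ (the point $\vy'$ is the second intersection of the line through $\vy$ in the direction obtained by reflecting, and one checks $\vx,\vy,\vy'$ determine the right circle — this is the same computation as in Claim~\ref{claim0}). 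Because $\vx\vy$ is a double-normal pair of $V$, every point of $V$ lies in the closed slab bounded by the two perpendicular planes through $\vx$ and $\vy$; hence no point of $V$ lies strictly outside the plane through $\vy$, i.e. no point of $V$ is in the open cap cut off by that plane on the far side from $\vx$. The corresponding statement for $V'$ follows by central symmetry: $V'$ lies in the slab bounded by the perpendicular planes through $\vx'$ and $\vy'$, so no point of $V'$ lies in the interior of the minor cap with diameter $\vx\vy'$ either (one must check which of the two caps cut off by the circle is the minor one and that it is the one avoided; since $\vx$ and $\vy$ are not antipodal the cap with diameter $\vx\vy'$ is a minor cap, and it lies on the side of the plane away from $\vx$, which is exactly the forbidden side for $V$, while the symmetric argument rules out $V'$). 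Therefore no point of $V\cup V'$ lies in the interior of the minor cap with diameter $\vx\vy'$, and since $\vx,\vy'$ are not antipodal, $\vx\vy'$ is an edge of the weak Gabriel graph of $V\cup V'$.

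Then I would observe that $G_2\subseteq$ (weak Gabriel graph of $V\cap V'$) follows immediately: $G_2$ is by definition the subgraph of $G_1$ induced on the subset $V\cap V'$, so every edge of $G_2$ is an edge of $G_1$, hence an edge of the weak Gabriel graph of $V\cup V'$ with both endpoints in $V\cap V'$; restricting the weak Gabriel graph to $V\cap V'$ can only remove obstructing points, so an emptiness condition that holds against $V\cup V'$ certainly holds against $V\cap V'$, and the edge is an edge of the weak Gabriel graph of $V\cap V'$.

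The main obstacle, as in the strict case, is purely the bookkeeping of which spherical cap is ``minor'' and which half-space is the ``forbidden'' one: one has to make sure the perpendicular plane through $\vy$ is the one whose associated empty cap has $\vx\vy'$ as a diameter and that the double-normal condition forbids points exactly in that cap, not its complement. Once that identification is pinned down (which uses that $\vx,\vy$ are not antipodal, so both relevant caps are proper minor caps), everything else is a direct translation of Claim~\ref{claim0} with ``open cap'' in place of ``open-or-boundary cap.''
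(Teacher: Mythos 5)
Your treatment of the two containments is structurally sound, and your argument for $G_2$ is in fact a cleaner route than the paper's: the paper re-runs the empty-cap argument on the centrally symmetric set $V\cap V'$ (viewing an edge of $G_2$ as coming from a double-normal pair of $V\cap V'$ in its own right), whereas you observe that the defining condition of the weak Gabriel graph is monotone under shrinking the point set, so the second containment follows immediately from the first once both endpoints lie in $V\cap V'$. That shortcut is correct.

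For $G_1$, however, you have swapped which perpendicular plane cuts out which circle, and the reason you give for the identification does not hold up. The circle with diameter $\vx\vy'$ is the intersection of $\bS^2$ with the plane $H_{\vx}$ through $\vx$ perpendicular to the chord $\vx\vy$, not with the plane through $\vy$: since $\vy\vy'$ is a diameter of the great circle through $\vx$, $\vy$, $\vy'$, Thales gives $\myangle\vy\vx\vy'=\pi/2$, so $\vy'\in H_{\vx}$, and $\vx\vy'$ is a diameter of $H_{\vx}\cap\bS^2$ by symmetry. Your assertion that ``$\vx\vy'$ is precisely the chord lying in that perpendicular plane and passing through $\vy$'' is false for the plane through $\vy$: that plane contains neither $\vx$ nor $\vy'$ (one checks $\langle\vy'-\vy,\vx-\vy\rangle=2(1-\langle\vx,\vy\rangle)\neq 0$). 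What the plane through $\vy$ cuts out is the circle with diameter $\vx'\vy$, the \emph{other} edge of $G_1$ arising from the same double-normal pair. Because $E_1$ is invariant under interchanging $\vx$ and $\vy$, your argument with the labels corrected does prove the claim: the slab of the double-normal pair is the centrally symmetric set $\abs{\langle\vz,\vx-\vy\rangle}\leq 1-\langle\vx,\vy\rangle$, so it contains $V'$ as well as $V$, while the open minor cap bounded by $H_{\vx}\cap\bS^2$ lies strictly on the far side of $H_{\vx}$ from the centre (and from $\vy$), hence outside the slab. So this is a repairable labelling slip rather than a structural gap, but as written the one genuinely geometric step of the proof is justified by an incorrect statement and should be replaced by the Thales computation above, exactly as in Lemma~\ref{claim0}.
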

\begin{proof}
The fact that $G_1$ is a subgraph of the weak Gabriel graph of $V\cup V'$ is shown in the same way as Lemma~\ref{claim0} in the proof of Theorem~\ref{thm:0}.

If $\vx\vy$ is an edge of $G_2$, then $\vx,\vy\in V\cap V'$, and $\vx\vy$ is a double-normal pair of $V$.
Therefore, both $\vx\vy$ and $\vx'\vy'$ are double-normal pairs of $V\cap V'$.
As before, $\vx\vy'$ and $\vx'\vy$ are edges of the weak Gabriel graph of~$V\cap V'$.
\end{proof}
\begin{claim}\label{16}
$2N(V) = \card{E_1}+\card{E_2}+\card{V\cap V'}$.
\end{claim}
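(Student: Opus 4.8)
\emph{Plan.} I would prove Lemma~\ref{16} by double counting the incidences between double-normal pairs of $V$ and the edges of $E_1$ they ``produce''. Say that a double-normal pair $\va\vb$ of $V$ (so $\va,\vb\in V$, $\va\neq\vb$) \emph{produces} the two edges $\va\vb'$ and $\va'\vb$. First I would record the elementary facts: if $\va$ and $\vb$ are antipodal then $\va\vb$ produces no edge of $E_1$ (the candidate ``edge'' $\va\vb'=\va\va$ is degenerate and is excluded by the condition $\vx\neq\vy'$ in the definition of $E_1$); and if $\va$ and $\vb$ are not antipodal then $\va\vb$ produces exactly the two edges $\va\vb'$ and $\va'\vb$, which lie in $E_1$ and are distinct. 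Since every antipodal pair of points of $V$ is automatically a double-normal pair, and since $V\cap V'$ is centrally symmetric, the number of antipodal double-normal pairs of $V$ is $\tfrac12\card{V\cap V'}$; hence the number of (pair, produced edge) incidences equals
\[
2\Bigl(N(V)-\tfrac12\card{V\cap V'}\Bigr)=2N(V)-\card{V\cap V'}.
\]

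Counting the same incidences by edges: a fixed edge $\vp\vq\in E_1$, with its two (necessarily non-antipodal) endpoints $\vp,\vq$, is produced exactly by those among the two distinct pairs $\vp\vq'$ and $\vp'\vq$ that happen to be double-normal pairs of $V$; by the definition of $E_1$ at least one of them is, so $\vp\vq$ is produced by either one or two pairs. I claim that it is produced by two pairs precisely when $\vp,\vq\in V\cap V'$, i.e.\ precisely when $\vp\vq$ is an edge of the induced subgraph $G_2$. Granting this, the incidence count equals $\card{E_1\setminus E_2}\cdot 1+\card{E_2}\cdot 2=\card{E_1}+\card{E_2}$, and comparing with the display above yields $2N(V)=\card{E_1}+\card{E_2}+\card{V\cap V'}$.

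The heart of the proof is this last equivalence, which rests on one geometric observation: for $\vp,\vq\in\bS^2$ with $\vp\neq\vq$, the perpendicular bisector plane of the chord $\vp\vq'$ passes through the centre $\vo$ (because $\length{\vp}=\length{\vq'}=1$), so the slab bounded by the two hyperplanes through $\vp$ and through $\vq'$ perpendicular to $\vp\vq'$ is symmetric about $\vo$; moreover this slab is the same as the one associated with the pair $\vp'\vq$, since that chord lies on a parallel line, has the same perpendicular bisector plane through $\vo$, and the slab has the same width. Consequently $V$ lies in this slab if and only if $V'$ does. Now if $\vp,\vq\in V\cap V'$ then $\vp,\vq,\vp',\vq'$ all lie in $V$, so ``$\vp\vq'$ is a double-normal pair of $V$'' and ``$\vp'\vq$ is a double-normal pair of $V$'' both reduce to ``$V$ is contained in the common slab'', hence are equivalent; since $\vp\vq\in E_1$, one of them holds, and therefore both do. Conversely, if $\vp\vq$ is produced by two pairs then $\vp,\vq',\vp',\vq\in V$, which forces $\vp,\vq\in V\cap V'$. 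Everything else is routine bookkeeping; the one place where I expect to need care is the handling of the degenerate antipodal configurations, to be sure that each incidence is counted with exactly the right multiplicity.
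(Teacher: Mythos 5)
Your proof is correct and takes essentially the same route as the paper's: both arguments count each non-antipodal double-normal pair via the two edges $\vx\vy'$ and $\vx'\vy$ it contributes to $E_1$, observe that precisely the edges of $E_2$ arise from two such pairs (because the slab of a pair of points on $\bS^2$ is centrally symmetric about $\vo$), and account for the antipodal double-normal pairs via $\card{V\cap V'}$. Your version merely makes explicit the incidence double count and the slab-symmetry fact that the paper states without proof.
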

\begin{proof}
Each double-normal pair $\vx\vy$ of $V$, where $\vx\neq\vy'$, is represented by two edges $\vx\vy'$ and $\vx'\vy$ of $G_1$.
If in addition $\vx',\vy'\in V$, then $\vx'\vy'$ is also a double-normal pair of $V$, but represented by the same two edges $\vx\vy'$ and $\vx'\vy$ of $G_1$.
However, then these two edges are in $G_2$.
If $\vx=\vy'$, then $\vx$ and $\vy$ are antipodal points and correspond to the two points $\vx,\vy\in V\cap V'$.
\end{proof}
By Lemmas~\ref{15} and \ref{16}, and Theorem~\ref{lemma:sphere}, we obtain the upper bound
\begin{align*}2N(V)&\leq\dfrac{15}{4}\card{V\cup V'}-6 + \dfrac{15}{4}\card{V\cap V'}-6 + \card{V\cap V'}\\
&= \frac{15}{2}\card{V}-12+\card{V\cap V'}\\
&\leq\frac{17}{2}\card{V}-12,
\end{align*}
hence $N(V)\leq \frac{17}{4}\card{V}-6$.
Equality implies that $\card{V}=\card{V\cap V'}$ and that equality holds in Theorem~\ref{lemma:sphere}.
Thus, $V=V'$, and the faces of $\conv V$ are rectangles and triangles, with exactly three rectangles at each vertex.
This concludes the proof of Theorem~\ref{thm:sphere}.

\section{Proof of Theorem~\ref{thm:sphereconstruction}}\label{sect:5}
We start with a construction.
\begin{lemma}\label{construction}
For any even $k\geq 4$ and any $m\geq 1$, there exists a set $V\subset\bS^2$ such that $\card{V}=2(2^{m}-1)k$ and $N(V)=\frac{17}{4}\card{V}-\frac{3}{2}k$.
\end{lemma}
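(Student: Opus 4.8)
Here is how I would prove Lemma~\ref{construction}.

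The idea is to realise, for each even $k\ge 4$ and each $m\ge 1$, an explicit centrally symmetric polytope $P=P_{k,m}$ inscribed in $\bS^2$ whose boundary complex consists of two congruent regular $k$-gons near the poles together with rectangles and acute isosceles triangles, with exactly three rectangles at every vertex other than the $2k$ vertices of the two polar $k$-gons (where there are two). Concretely, for $j=1,\dots,m$ place a regular $2^{j-1}k$-gon $R_j^{+}$ on the circle of radius $\rho_j$ about the $z$-axis at height $\sqrt{1-\rho_j^{2}}$, with adjacent rings in offset position, so that each vertex of $R_j^{+}$ lies angularly above the midpoint of an edge of $R_{j+1}^{+}$; fix $\rho_1=2^{-m}$ and $\rho_{j+1}=2\rho_j\cos(\pi/2^{j}k)$ for $j<m$ (so that $\rho_m<\tfrac12$). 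Let $R_j^{-}=\{-\vx:\vx\in R_j^{+}\}$ and $V=\bigcup_{j}(R_j^{+}\cup R_j^{-})$, so $\card V=2k(2^{m}-1)$ and $V$ is centrally symmetric. The faces of $P$ will be: the regular $k$-gon spanned by $R_1^{+}$ and its antipode; for each $j<m$, a ``doubling band'' between $R_j^{+}$ and $R_{j+1}^{+}$ (and its mirror image in the south) consisting of $2^{j-1}k$ rectangles and $2^{j-1}k$ isosceles triangles, one of each per edge, respectively vertex, of $R_j^{+}$; and an ``equatorial belt'' of $2^{m-1}k$ rectangles joining $R_m^{+}$ to $R_m^{-}$.

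First I would check that this is indeed a convex polytope with that face structure. The relation $\rho_{j+1}=2\rho_j\cos(\pi/2^{j}k)$ is precisely what makes each band quadrilateral (an isosceles trapezoid whose two parallel sides then have equal length) an honest planar rectangle; the same short computation shows that each band triangle has apex angle at most $\pi/k<\pi/2$ and hence is acute, so no further condition is needed. The belt quadrilaterals are rectangles because $R_m^{-}$ is the reflection of $R_m^{+}$ in the plane $z=0$. Convexity, and the assertion that $\conv V$ has no faces besides those listed, follow because the rings lie on $\bS^2$ with radii decreasing geometrically towards the poles, so that the plane spanned by each listed face has all vertices of $V$ not on it strictly to one side; this is elementary but is the most tedious step.

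Next I would translate double-normal pairs into weak Gabriel edges. Since $V$ is centrally symmetric, $V=V'$ in the notation of Section~\ref{sect:4}; hence $G_2=G_1$ there and Lemmas~\ref{15} and~\ref{16} give $2N(V)=2\card{E_1}+\card V$. Moreover, for centrally symmetric $V$ the argument behind Lemma~\ref{15} reverses: a pair $\{\vc,\vd\}$ is an edge of the weak Gabriel graph of $V$ if and only if $\{\vc,-\vd\}$ is a double-normal pair of $V$, because (using $V=-V$) both statements assert that no point of $V$ lies in the interior of the minor cap with diameter $\vc\vd$. Thus $E_1$ is exactly the edge set of the weak Gabriel graph $G$ of $V$, and
\[ N(V)=\card{E(G)}+\tfrac12\card V . \]
So it remains to show $\card{E(G)}=\tfrac{15}{4}\card V-\tfrac32 k$.

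Since $\vo$ is interior to $\conv V$, the Delaunay tiling of $V$ is the central projection of $\partial(\conv V)$, and by Lemma~\ref{delaunaycrossing} together with the analysis in the proof of Theorem~\ref{lemma:sphere} — adapted to allow the two polar $k$-gon faces — the weak Gabriel graph $G$ consists precisely of the edges of $\conv V$, both diagonals of each rectangular face, and the $k/2$ circumcircle-diameter diagonals of each polar $k$-gon (the shorter $k$-gon diagonals are killed because an intermediate $k$-gon vertex lies strictly inside the corresponding cap, as $k>4$; the acuteness of the triangular faces is what guarantees that every edge of $\conv V$ survives as a Gabriel edge). Writing $F_3,F_4$ for the numbers of triangular and rectangular faces, Euler's formula and double counting of vertex--face incidences give $F_3+2F_4=2\card V-2k$, while the construction contributes $F_4=\tfrac34\card V-\tfrac12 k$ rectangles; summing the edge count then yields $\card{E(G)}=3\card V-k+F_4=\tfrac{15}{4}\card V-\tfrac32 k$, and therefore $N(V)=\tfrac{17}{4}\card V-\tfrac32 k$, as required. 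The main obstacle throughout is the first verification step: exhibiting the prescribed combinatorial type as a genuine inscribed convex polytope and confirming that its weak Gabriel graph contains nothing beyond the edges listed; once that is secured, the enumeration is routine.
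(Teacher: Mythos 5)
Your proposal is correct and follows essentially the same route as the paper: the same stacked construction of regular $2^{j-1}k$-gons (doubling towards the equator, with the antipodal copy adjoined), the same translation of double-normal pairs into weak Gabriel edges via central symmetry giving $N(V)=\card{E(G)}+\tfrac12\card{V}$, and the same Euler-formula count of the Delaunay faces. The only quibble is a small quantitative slip: the apex angle of a band triangle is bounded by $2\pi/(2^{j-1}k)\le 2\pi/k$ rather than $\pi/k$, but this is still strictly less than $\pi/2$ for $k\ge 4$, so the triangles are acute and the argument is unaffected.
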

\begin{proof}
Let $\vp$ denote the north pole on $\bS^2$, and let $C_0, C_1, \dots, C_{m-1}$ be circles in the northern hemisphere of $\bS^2$ equidistant from $\vp$ (that is, lines of latitude), with their radii chosen in such a way that we can inscribe a regular $2^ik$-gon in $C_i$ such that all $m$ polygons have the same spherical side length.
Since it is possible to do this in the plane, it is also possible on $\bS^2$ in a sufficiently small neighbourhood of $p$ (Fig.~\ref{fig:constr}).
\begin{figure}
\centering
\begin{overpic}[scale=0.6]{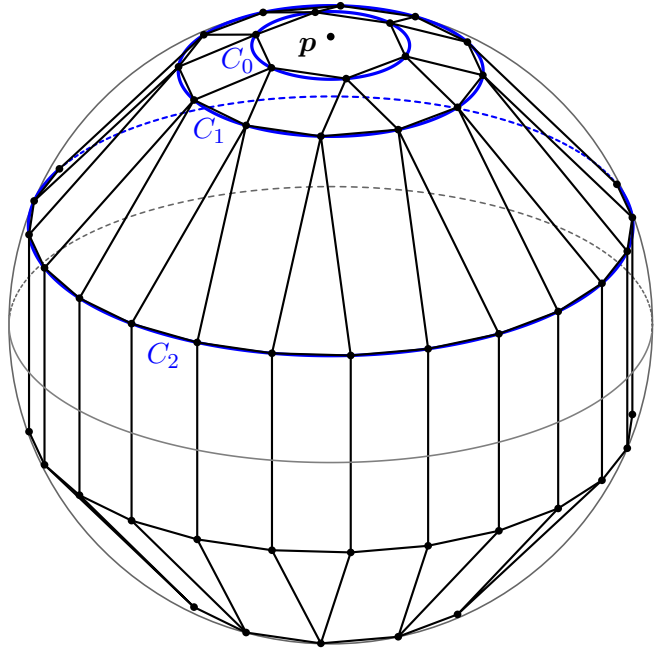}
    \put(45.5,89){$\vp$}
    \put(34.5,86.5){\color{blue}$C_0$}
    \put(30.5,76.5){\color{blue}$C_1$}
    \put(24,44.5){\color{blue}$C_2$}
\end{overpic}
\caption{Construction in Lemma~\ref{construction} ($k=6$, $m=3$)}\label{fig:constr}
\end{figure}
Choose the regular polygons in such a way that an edge can be chosen from each polygon so that all the chosen edges (when considered as chords of the sphere) are parallel.
Let $V$ be the set of all the vertices of these $m$ polygons together with their antipodal points.
Then \[\card{V}=2(k+2k+2^2k+\dots+2^{m-1}k)=2(2^m-1)k.\]
We next count the number of double-normal pairs by first counting the number of faces of the Delaunay tiling.
We only present the case $k>4$.
(The case $k=4$ is exactly the same, but with slightly different notation.)

The faces of the Delaunay tiling of $V$ are, apart from two spherical $k$-gons, spherical triangles and equiangular spherical quadrilaterals.
In the region bounded by $C_i$ and $C_{i+1}$ there are $2^ik$ spherical triangles and $2^ik$ spherical quadrilaterals ($i=0,\dots,m-2$).
In the region between $C_{m-1}$ and $-C_{m-1}$ there are $2^{m-1}k$ spherical quadrilaterals (and no spherical triangles).
Finally, there are $2$ spherical $k$-gons.
In the notation of the proof of Theorem~\ref{lemma:sphere}, the number of triangles is \[f_3=2(k+2k+\dots+2^{m-2}k)=2(2^{m-1}-1)k,\] the number of spherical quadrilaterals is \[f_4=2(k+2k+\dots+2^{m-2}k)+2^{m-1}k=(2^m+2^{m-1}-2)k,\] and the number of $k$-gons is $f_k=2$.
Let $e$ denote the number of edges of the Delaunay triangulation.
By Euler's formula, $\card{V}-e+f_3+f_4+f_k=2$.
It follows that $e=k(2^{m+2}+2^{m-1}-6)$.

Finally, we calculate the number of double-normals.
The edges $\vx\vy$ and $\vx'\vy'$ of the weak Gabriel graph $G=(V,E)$ correspond to the non-antipodal double-normal pairs $\vx\vy'$ and $\vx'\vy$.
There are $\frac12\card{V}$ double-normal antipodal pairs of points.
Therefore,
\begin{align*}
N(V)&=\card{E}+\frac12\card{V}=e+2f_4+\frac{k}{2}f_k+\frac{1}{2}\card{V}\\
&=(2^{m+3}+2^{m-1}-10)k=\frac{17}{4}\card{V}-\frac{3}{2}k.\qedhere
\end{align*}
\end{proof}
The first part of Theorem~\ref{thm:sphereconstruction} follows from Lemma~\ref{construction} if we set $k=4$.
For general values of $n$, we let $k$ and $2^m$ be of the order of $\sqrt{n}$, use the construction of $V$ from Lemma~\ref{construction}, making sure that $\card{V}\leq n$ with $n-\card{V}=O(\sqrt{n})$, and then add the lacking points inside some triangle of the Delaunay tiling.

More precisely, let $n\geq 16$, $m=\lfloor\frac{1}{2}\log_2 n -1\rfloor$, and $k=2\lfloor n/(4(2^{m}-1))\rfloor$, and apply Lemma~\ref{construction}.
The resulting set $V\subset\bS^2$ satisfies \[n-(2^{m+2}-4)<\card{V}=2(2^m-1)k\leq n,\] hence, $n-\card{V}<2^{m+2}\leq 2\sqrt{n}$ and $N(V)=\frac{17}{4}\card{V}-3k/2=\frac{17}{4}n-O(\sqrt{n})$.
If we add $n-\card{V}$ points in the interior of some spherical triangle $\triangle\va\vb\vc$ of the Delaunay tiling of $V$, 
we destroy the $6$ double-normal pairs $\va\vb'$, $\va'\vb$, $\vb\vc'$, $\vb'\vc$, $\va\vc'$, $\va'\vc$, 
while perhaps adding some more double-normal pairs.
We end up with a set of $n$ points with $\frac{17}{4}n-O(\sqrt{n})$ double-normal pairs, which shows the second part of Theorem~\ref{thm:sphereconstruction}.

\subsection*{Acknowledgement}
We thank Endre Makai for a careful reading of the manuscript and for many enlightening comments.

\end{document}